\documentclass{amsart}
\usepackage{amsfonts}
\usepackage{graphicx}
\usepackage{amscd}
\usepackage{amsmath}
\usepackage{amssymb}
\usepackage{stmaryrd} 
\usepackage{enumitem}

\usepackage{tikz-cd}
\usepackage[pdf]{pstricks}
\usepackage{chemfig}
\usepackage{chemmacros}
\usetikzlibrary{cd}

\makeatletter
\@namedef{subjclassname@2010}{%
  \textup{2010} Mathematics Subject Classification}
\makeatother

\setcounter{MaxMatrixCols}{30}
\theoremstyle{plain}

\newtheorem{corollary}{\bf Corollary}

\newtheorem{lemma}{\bf Lemma}

\newtheorem{proposition}{\bf Proposition}

\newtheorem{theorem}{\bf Theorem}

\theoremstyle{definition}

\numberwithin{equation}{section}


\title[Einstein hypersurfaces]{O\MakeLowercase{n} E\MakeLowercase{instein hypersurfaces of} $I\times_{f}\mathbb{Q}^{n}(c)$}


\author{V. Borges}
\author{A. Da Silva}

\address[V. Borges]{Faculdade de Matem\'{a}tica, Universidade Federal do Par\'{a}\\
	66075-110 Be\-l\'{e}m, Par\'{a}, Brazil.}
\email{valterborges@ufpa.br}

\address[A. da Silva]{Faculdade de Matem\'{a}tica, Universidade Federal do Par\'{a}\\
	66075-110 Be\-l\'{e}m, Par\'{a}, Brazil.}
\email{adamsilva@ufpa.br}

\subjclass[2020]{Primary 53C42, 53B25, 53C25; Secondary 53C15}
\keywords{Isometric Immersions, Einstein Manifolds, Warped Product, Multiply Warped Product, Frobenius Theorem}

\begin{document}

\newcommand{\spacing}[1]{\renewcommand{\baselinestretch}{#1}\large\normalsize}
\spacing{1.2}

\begin{abstract}
In this paper, we investigate Einstein hypersurfaces of the warped product $I\times_{f}\mathbb{Q}^{n}(c)$, where $\mathbb{Q}^{n}(c)$ is a space form of curvature $c$. We prove that $M$ has at most three distinct principal curvatures and that it is locally a multiply warped product with at most two fibers. We also show that exactly one or two principal curvatures on an open set imply constant sectional curvature on that set. For exactly three distinct principal curvatures this is no longer true, and we classify such hypersurfaces provided it does not have constant sectional curvature and a certain principal curvature vanishes identically.
\end{abstract}

\maketitle

\section{Introduction and Main Results}\label{intro}


The investigation of Einstein hypersurfaces of space forms $\mathbb{Q}^{n+1}(c)$ goes back to the mid $30$'s \cite{fialkow,thomas} and a complete local classification was obtained in $1968$, by Ryan \cite{Ryan}. His proof goes through showing that these hypersurfaces have at most two distinct principal curvatures at each point. Using the integral manifolds of the distributions corresponding to the principal curvatures, he shows that either $M$ has constant sectional curvature, which is not smaller than $c$, or $c>0$ and the Einstein metric splits off as certain Riemannian products between spheres.

A complete classification is also known when the ambient space is a complex or quaternionic space form of non zero curvature. In this direction, the first result came out in 1982, when Cecil and Ryan \cite{cery} showed that there is no real Einstein hypersurface in the complex projective space $\mathbb{CP}^n$. Three years later, Montiel \cite{montiel} showed the same result in the complex hyperbolic space $\mathbb{CH}^n$. Martinéz and Pérez \cite{mape}, in turn, showed in 1985 that certain geodesic hyperspheres are the only real Einstein hypersurfaces of the quaternionic projective space $\mathbb{QP}^n$. The quaternionic hyperbolic space $\mathbb{QH}^n$, on the other hand, admits no real Einstein hypersurface, as proved in 1996 by Ortega and Peréz \cite{orper}.

Einstein hypersurfaces of the cylinders $\mathbb{R}\times\mathbb{S}^{n}$ and $\mathbb{R}\times\mathbb{H}^{n}$ have also recently been classified \cite{SantosPN}. Here the classical theory of isoparametric hypersurfaces of space forms, developed by Cartan \cite{cartan}, plays an important role. Another important point in this classification is the fact that the orthogonal projection of $\partial_{t}$ on the Einstein hypersurface is a principal direction, where $\partial_{t}$ is the field spanning the factor $\mathbb{R}$. Their main result asserts that Einstein hypersurfaces in these ambient have constant sectional curvature, which allowed the use of a previous classification.

In this paper we investigate Einstein manifolds $M^n$, satisfying $Ric=\rho g$, which are hypersurfaces of the warped product $I\times_{f}\mathbb{Q}^{n}(c)$. As three dimensional Einstein manifolds have constant sectional curvature, our main results are concerned with the case where $n\geq4$. Here, $f:I\rightarrow\mathbb{R}$ is a positive smooth function, $\mathbb{Q}^{n}(c)$ is space form of constant curvature $c\in\{-1,0,1\}$ and $I$ is an interval. This encloses a large variety of ambient spaces. For instance, space forms and cylinders can be obtained in this way by choosing suitable warping functions. However, we notice that our arguments do not apply to the case where the ambient $I\times_{f}\mathbb{Q}^{n}(c)$ has constant sectional curvature.

In order to state the main results of this paper, let us still denote by $\partial_{t}$ the direction spanning $I$. Consider the orthogonal decomposition $\partial_{t}=T+\theta\mathcal{N}$ on points of $M$, where $\mathcal{N}$ is the unit normal field of $M$. Both $T$ and $\theta$ play an important role in the investigation of hypersurfaces of $I\times_{f}\mathbb{Q}^{n}(c)$. For instance, when $T$ vanishes identically, $M$ is isometric to $\{t\}\times\mathbb{Q}^{n}(c)$ with the induced metric (see \cite{benoa,Ortega}), which has constant sectional curvature. In this case we say that $M$ is a {\it slice}. Our main theorem is the following one.

\begin{theorem}\label{intM}
Let $(M^{n},g)$, $n\geq4$, be an Einstein hypersurface of $I\times_{f}\mathbb{Q}^{n}(c)$ and suppose that $I\times_{f}\mathbb{Q}^{n}(c)$ does not have constant sectional curvature. Let $U\subset M$ be an open set where $T$ does not vanish. Then
\begin{enumerate}
	\item\label{itone} $M$ has at least $2$ and at most $3$ distinct principal curvatures on $U$;
	\item\label{ittwo} $T$ is a principal direction of $M$ on $U$ with multiplicity $1$;
	\item if $M$ has exactly $3$ distinct principal curvatures on $U$, say $\lambda_{n}$, $\lambda_{1}$ and $\lambda_{2}$, then their multiplicities $1$, $p_{1}$ and $p_{2}$, respectively, are constant, and $U$ is locally isometric to $J\times N_{1}^{p_{1}}\times N_{2}^{p_{2}}$, with metric
	\begin{equation*}\label{doubwarpint}
		g=ds^2+\varphi_{1}(s)^2g_{1}+\varphi_{2}(s)^2g_{2},
	\end{equation*}
	where $J\subset\mathbb{R}$ is an interval, $(N_{i}^{p_{i}},g_{i})$ is a space form and $\varphi_{i}:J\rightarrow\mathbb{R}$ is a positive smooth function, $i\in\{1,2\}$.
\end{enumerate}
\end{theorem}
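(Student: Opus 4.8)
The plan is to exploit the structure of a hypersurface of a warped product together with the Einstein condition, following the strategy of Ryan but adapted to the non-constant curvature ambient. First I would recall the Codazzi and Gauss equations for $M^n\subset I\times_f\mathbb{Q}^n(c)$. The ambient curvature tensor of a warped product is well known, and its ``non-constant curvature'' part is concentrated in directions involving $\partial_t$; projected to $M$, this means the normal component of $\bar R(X,Y)Z$ is governed by $\theta$ and $T$. The key preliminary computation is the Codazzi equation, which will read (schematically) $(\nabla_X A)Y-(\nabla_Y A)X = (\text{function})\,(\langle X,T\rangle Y-\langle Y,T\rangle X)$; combined with $Ric=\rho g$ and the Gauss equation $Ric(X,Y)=(\text{trace terms})+(\text{ambient Ricci terms})-\langle AX,AY\rangle nH+\dots$, one deduces that $A$ satisfies a Codazzi-type identity whose ``defect'' is a rank-one (or rank-two) tensor built from $T$. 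This is exactly the kind of algebraic constraint that forces $T$ to be an eigenvector of $A$ and limits the number of distinct eigenvalues.

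Granting parts \eqref{itone} and \eqref{ittwo}, which I assume are established before this point, the heart of part (3) is to show that the eigendistributions integrate to a multiply warped product. I would proceed as follows. Let $E_1,E_2$ be the eigendistributions of $\lambda_1,\lambda_2$ and let $E_T=\operatorname{span}\{T/|T|\}$ be the eigenline for $\lambda_n$. Step one: prove the multiplicities $p_1,p_2$ are locally constant on $U$, e.g. by showing the set where three distinct principal curvatures occur is open (it is, by semicontinuity plus \eqref{itone}) and that on it $p_1,p_2$ cannot jump — this follows from smoothness of $A$ and the fact that a jump would create a fourth eigenvalue in a limit or violate \eqref{itone}. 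Step two: use the Codazzi equation componentwise to show each $E_i$ is integrable with totally umbilical (indeed spherical) leaves, and that $E_i\oplus E_T$ and $E_1\oplus E_2$ are integrable as well; the key identity is that for $X\in E_i$, $Y\in E_j$ with $i\neq j$, the Codazzi defect lies in $E_T$, forcing $\langle\nabla_X Y,Z\rangle=0$ for $Z\in E_k$, $k\neq i,j$, which is precisely the condition in the de Rham–Hiepko / Nölker decomposition theorem for warped products. Step three: identify the base as a curve ($J\subset\mathbb{R}$) integrating $T$, conclude via Nölker's theorem that $U$ is locally $J\times_{\varphi_1}N_1\times_{\varphi_2}N_2$, and finally invoke the Einstein condition once more: on a multiply warped product the Einstein equations restricted to each fiber force $(N_i,g_i)$ to have constant curvature (the fiber Ricci must be a constant multiple of $g_i$, and since $\dim N_i=p_i$ can be taken $\geq 2$ after possibly relabeling — or is $1$, trivially a space form — this gives Einstein fibers, hence space forms when $p_i\leq 3$; for $p_i\geq 4$ one needs the extra warped-product Einstein relations to upgrade Einstein to constant curvature, which is a standard computation with the warped-product Ricci formulas).

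The main obstacle I anticipate is Step two: extracting from the Codazzi equation the precise vanishing $\langle\nabla_X Y,Z\rangle=0$ for eigenvectors in three different distributions, because here the ambient is not a space form and the Codazzi defect, while rank-one along $T$, interacts with all three distributions. One must carefully track which components of $(\nabla_X A)Y-(\nabla_Y A)X$ are forced to vanish and which are absorbed by the $T$-direction; the fact that $T\in E_T$ is itself an eigendirection (part \eqref{ittwo}) is what makes this work, since it means the defect never ``leaks'' into $E_1$ or $E_2$. A secondary technical point is ensuring the warping functions depend only on the $J$-parameter $s$ and not on the fiber coordinates — this should follow from the eigenvalues $\lambda_i$ being constant along each leaf of $E_i\oplus E_T$, which again is a Codazzi consequence. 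The upgrade from Einstein fiber to constant-curvature fiber in dimensions $\geq 4$ is routine but must be done, and it is where the hypothesis $n\geq 4$ and the explicit warped-product Ricci identities get used.
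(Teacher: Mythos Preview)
Your overall strategy for parts (1) and (2) is in the right spirit, though the paper's argument is more direct than the ``Codazzi-type defect'' picture you sketch: one writes the Ricci tensor of $M$ explicitly from the traced Gauss equation (Corollary~\ref{Riccitensor}), imposes $Ric=\rho g$, and reads off in an eigenbasis of $A$ that the off-diagonal entries force $t_it_j=0$ for $i\neq j$ whenever $b\neq 0$; this makes $T$ an eigenvector, and the diagonal entries become the two quadratic equations (\ref{eq1}), (\ref{eq2}) whose roots give at most three principal curvatures (Proposition~\ref{Tdiag}). No Codazzi identity enters at that stage.

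For the multiply warped decomposition in (3), your route via the N\"olker/de Rham--Hiepko theorem is a legitimate alternative to the paper's approach, which is a hands-on coordinate computation (Lemmas~\ref{integrability} and \ref{propos2}, Proposition~\ref{prop1}): after showing $\nabla_{X_j}X_i\in D_i$ for $X_i\in D_i$, $X_j\in D_j$, $i\neq j$, the paper integrates an ODE for the metric coefficients along integral curves of $T/|T|$. Both arguments rest on the same Codazzi computation and on the fact (Proposition~\ref{lambdannot0}) that the $\lambda_i$ are constant on the levels of $\pi$, which you correctly flag as the key secondary point.

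There is, however, a genuine gap in your final step. You propose to conclude that the fibers $(N_i,g_i)$ are space forms by ``invoking the Einstein condition once more'' and using ``the extra warped-product Einstein relations to upgrade Einstein to constant curvature''. This does not work: the Einstein condition on a multiply warped product $J\times_{\varphi_1}N_1\times_{\varphi_2}N_2$ only forces each $N_i$ to be Einstein, and in dimension $p_i\geq 4$ an Einstein manifold need not have constant curvature (e.g.\ take $N_1=\mathbb{CP}^2$ with the Fubini--Study metric in a sine-cone construction). No amount of juggling the warped-product \emph{Ricci} identities will repair this. What the paper uses instead is the full Gauss equation \ref{Gausseq}, not merely its trace: for orthonormal $X,Y\in D_i$ orthogonal to $T$, Gauss gives $\mathrm{sec}_M(X,Y)=\lambda_i^2-a$, a function of $s$ alone; comparing with the warped-product sectional curvature formula
\[
\mathrm{sec}_M(X,Y)=\frac{1}{\varphi_i^{2}}\Bigl(\mathrm{sec}_{N_i}(X,Y)-\Bigl(\frac{d\varphi_i}{ds}\Bigr)^{2}\Bigr)
\]
shows that $\mathrm{sec}_{N_i}(X,Y)$ is independent of $X,Y$ and hence equal to a constant $k_i$ (this is how Proposition~\ref{step0} is proved, by the same device as in Theorem~\ref{rigidity1}). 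The hypersurface structure, through Gauss, carries strictly more information than the intrinsic Einstein condition, and that surplus is exactly what pins down constant curvature on the fibers.
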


A first consequence of item $(\ref{itone})$ is that if $M$ is totally umbilical, then $T$ vanishes. Consequently,

\begin{corollary}\label{corollary}
	Let $(M^{n},g)$, $n\geq4$, be an Einstein hypersurface of $I\times_{f}\mathbb{Q}^{n}(c)$ and suppose that $I\times_{f}\mathbb{Q}^{n}(c)$ does not have constant sectional curvature. If $M$ is totally umbilical, then it is a slice.
\end{corollary}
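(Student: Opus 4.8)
The plan is to deduce Corollary \ref{corollary} directly from item \eqref{itone} of Theorem \ref{intM}. Suppose, for contradiction, that $M$ is a totally umbilical Einstein hypersurface of $I\times_{f}\mathbb{Q}^{n}(c)$ with this ambient not of constant sectional curvature, and suppose $M$ is \emph{not} a slice. Then the set $U=\{x\in M:T(x)\neq 0\}$ is nonempty, and by continuity it is open. On $U$ the hypothesis of Theorem \ref{intM} is satisfied, so item \eqref{itone} guarantees that $M$ has at least $2$ distinct principal curvatures at every point of $U$. But $M$ totally umbilical means that at each point there is exactly one principal curvature (the shape operator is a multiple of the identity), contradicting the lower bound of $2$ distinct principal curvatures on the nonempty open set $U$.

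Hence no such open set $U$ can exist, i.e.\ $T\equiv 0$ on $M$. By the cited fact (see \cite{benoa,Ortega}), a hypersurface of $I\times_{f}\mathbb{Q}^{n}(c)$ on which $T$ vanishes identically is isometric to a slice $\{t\}\times\mathbb{Q}^{n}(c)$ with the induced metric; that is, $M$ is a slice, completing the argument.

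I do not expect any genuine obstacle here: the corollary is a formal consequence of item \eqref{itone}, the only mild points being (i) noting that totally umbilical forces the shape operator to be scalar so that there is a single principal curvature everywhere, in particular strictly fewer than $2$ distinct ones, which clashes with the theorem on $U$; and (ii) invoking the standard dichotomy that $T\equiv 0$ characterizes slices. One should also implicitly use $n\geq 4$ so that Theorem \ref{intM} applies; this is already part of the corollary's hypotheses, so nothing further is needed.
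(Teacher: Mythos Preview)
Your proof is correct and follows essentially the same approach as the paper: the authors simply remark that item~\eqref{itone} of Theorem~\ref{intM} forces $T$ to vanish identically on a totally umbilical hypersurface, whence $M$ is a slice. Your contrapositive argument is exactly this observation written out in full.
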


In order to prove Theorem \ref{intM}, we show that the distributions defined by the principal curvatures are involutive and manage to show that the metric of $M$ splits as a multiply warped product (see Lemma \ref{integrability} and Proposition \ref{prop1}). We remark that $T$ being a principal direction is essential in our proof. It is exactly the case if the ambient space does not have constant sectional curvature (see Proposition \ref{Tdiag}). Our tools are the fundamental equations describing a hypersurface of $I\times_{f}\mathbb{Q}^{n}(c)$, recently obtained in \cite{Ortega} (\cite{benoa}, for $f$ constant), and Frobenius theorem. 

When $M$ has only two distinct principal curvatures in $U\subset M$, we are able to show that $U$ is locally conformally flat. Consequently we have the following.

\begin{theorem}\label{rigidity1}
	Let $(M^{n},g)$, $n\geq4$, be an Einstein hypersurface of $I\times_{f}\mathbb{Q}^{n}(c)$ and suppose that $I\times_{f}\mathbb{Q}^{n}(c)$ does not have constant sectional curvature. Let $U\subset M$ be an open set where $T$ does not vanish and where $M$ has exactly two principal curvatures. Then, $M$ has constant sectional curvature on $U$.
\end{theorem}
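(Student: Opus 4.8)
The plan is to prove that $U$ is locally conformally flat and then invoke the classical fact that a locally conformally flat Einstein manifold of dimension $n\geq 4$ has constant sectional curvature. Since $n\geq 4$, local conformal flatness on $U$ is equivalent to the vanishing of the Weyl tensor, and an algebraic curvature tensor has vanishing Weyl component precisely when it is the Kulkarni--Nomizu product $g\odot S$ of the metric with some symmetric $(0,2)$-tensor $S$; here $g\odot S$ denotes the $(0,4)$-tensor $(X,Y,Z,W)\mapsto g(X,W)S(Y,Z)+g(Y,Z)S(X,W)-g(X,Z)S(Y,W)-g(Y,W)S(X,Z)$. Hence it suffices to exhibit the Riemann tensor of $M$ on $U$ in the form $g\odot S$.

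Two facts feed into this. First, the ambient $I\times_{f}\mathbb{Q}^{n}(c)$ is itself locally conformally flat: being a warped product over a one-dimensional base with fiber of constant curvature, its curvature tensor is a combination, with function coefficients, of $\bar g\odot\bar g$ and $\bar g\odot(\partial_{t}^{\flat}\otimes\partial_{t}^{\flat})$, which is directly readable off the fundamental equations for hypersurfaces of $I\times_{f}\mathbb{Q}^{n}(c)$ recalled above. Restricting to vectors tangent to $M$ and using $\partial_{t}=T+\theta\mathcal{N}$, the $1$-form $\partial_{t}^{\flat}$ restricts to $T^{\flat}$ along $M$, so the ambient term appearing in the Gauss equation of $M$ on $U$ is again of the form $g\odot S_{0}$, with $S_{0}$ a combination of $g$ and $T^{\flat}\otimes T^{\flat}$. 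Second, by Theorem \ref{intM}, on $U$ the field $T$ spans a principal direction of multiplicity $1$; as $M$ has exactly two distinct principal curvatures there, the remaining one, $\lambda$, has multiplicity $n-1$, and the second fundamental form, viewed as a $(0,2)$-tensor, is $h=\lambda\,g+(\mu-\lambda)\,|T|^{-2}\,T^{\flat}\otimes T^{\flat}$, where $\mu$ is the principal curvature along $T$. Using bilinearity of the Kulkarni--Nomizu product and the identity $\omega\odot\omega=0$, valid for any rank-one symmetric $\omega$, the second-fundamental-form term of the Gauss equation, a multiple of $h\odot h$, reduces to
\[
h\odot h=\lambda^{2}\,g\odot g+2\lambda(\mu-\lambda)\,|T|^{-2}\,g\odot(T^{\flat}\otimes T^{\flat}),
\]
once more of the shape $g\odot S_{1}$ with $S_{1}$ built from $g$ and $T^{\flat}\otimes T^{\flat}$.

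Adding the two contributions, the Gauss equation exhibits $R_{M}=g\odot S$ on $U$ for a symmetric $(0,2)$-tensor $S$; therefore the Weyl tensor of $M$ vanishes on $U$ and $U$ is locally conformally flat. Being also Einstein with $n\geq 4$, its scalar curvature is constant by Schur's lemma and its Riemann tensor is then a constant multiple of $g\odot g$, that is, $U$ has constant sectional curvature, as claimed. The one genuine input here is the local conformal flatness of the ambient --- equivalently, that its curvature tensor is a Kulkarni--Nomizu product of $g$ with a symmetric $2$-tensor --- which is exactly where the constant sectional curvature of the fiber $\mathbb{Q}^{n}(c)$ enters; this is also why the standing hypothesis that $I\times_{f}\mathbb{Q}^{n}(c)$ is not of constant sectional curvature causes no trouble, since only conformal flatness of the ambient, not constancy of its curvature, is used. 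Granting this, which the recalled fundamental equations make explicit, the rest is a short computation with the Gauss equation and Theorem \ref{intM}, and I foresee no real difficulty there.
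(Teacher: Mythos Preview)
Your argument is correct and reaches the same endpoint as the paper --- local conformal flatness of $U$, then Einstein $+$ LCF $\Rightarrow$ constant sectional curvature --- but by a genuinely different and more algebraic route. The paper first invokes the warped-product decomposition $U\cong J\times_{\varphi}N^{n-1}$ (their Proposition~\ref{prop2}, which needs the integrability Lemma~\ref{integrability} and related Frobenius machinery), then equates the multiply-warped curvature formulas with the Gauss equation to see that $N$ has constant sectional curvature, and finally cites the classical fact that a warped product over an interval with constant-curvature fiber is locally conformally flat. You bypass all of that structure: once Theorem~\ref{intM} hands you $AT=\mu T$ with multiplicity $1$, the second fundamental form is $h=\lambda g+(\mu-\lambda)|T|^{-2}T^{\flat}\!\otimes T^{\flat}$, the rank-one identity $\omega\odot\omega=0$ kills the cross term in $h\odot h$, and the explicit Gauss equation~\ref{Gausseq} already displays the ambient contribution as $g\odot(-\tfrac{a}{2}g-b\,T^{\flat}\!\otimes T^{\flat})$, so $R_{M}=g\odot S$ on the nose. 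This is shorter and avoids Proposition~\ref{prop2} entirely; what the paper's approach buys in return is the extra geometric information that $U$ is an honest warped product with constant-curvature fiber, which your purely algebraic computation does not see. One small remark: your closing comment that the non-constant-curvature hypothesis on the ambient ``causes no trouble'' is slightly understated --- it is precisely that hypothesis (equivalently $b\neq 0$) which, via Theorem~\ref{intM}, forces $T$ to be principal of multiplicity $1$, the single structural fact your whole computation hinges on.
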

When $M$ has three distinct principal curvatures on $U$, Theorem \ref{intM} says that $U$ is locally isometric to a multiply warped product with one dimensional base and two fibers of constant sectional curvatures, say, $k_{1}$ and $k_{2}$, and warping functions $\varphi_{1}$ and $\varphi_{2}$. The curvature of such spaces can be expressed in terms of $k_{1}$, $k_{2}$, $\varphi_{1}$, $\varphi_{2}$ and their derivatives. Such expressions can be found, for example, in \cite{wpmandlcfs}. When the principal curvature corresponding to $T$, denoted by $\lambda_{n}$, vanishes, these formulas for the curvature of $M$ together with Gauss equation yield the following result.

\begin{theorem}\label{thmlambda_{n}=0}
	Let $(M^{n},g)$, $n\geq4$, be an Einstein hypersurface of $I\times_{f}\mathbb{Q}^{n}(c)$ and suppose that $I\times_{f}\mathbb{Q}^{n}(c)$ does not have constant sectional curvature. Let $U\subset M$ be an open set where $T$ does not vanish and where $\lambda_{n}$ vanishes. Then, $M$ does not have constant sectional curvature in $U$ if and only if the following are true
	\begin{enumerate}
		\item $c=1$, $p_{1}\geq2$, $p_{2}\geq2$, $n\geq5$, $\theta\equiv0$ and $f$ is a solution of
		\begin{equation*}
			\left(\frac{df}{dt}\right)^2+\frac{\rho}{n-1}f^2=\frac{n-3}{n-2};
		\end{equation*}
		\item there are positive constants $k_{1}$ and $k_{2}$, so that $U$ is locally isometric to
		\begin{equation}\label{example1}
			I\times_{\varphi_{1}}\mathbb{S}^{p_{1}}\left(\frac{1}{\sqrt{k_{1}}}\right)\times_{\varphi_{2}}\mathbb{S}^{p_{2}}\left(\frac{1}{\sqrt{k_{2}}}\right),
		\end{equation}
	\end{enumerate}
	where $\varphi_{i}$ and the principal curvature $\lambda_{i}$, whose multiplicity is $p_{i}$, are given by 
	\begin{align*}
		\varphi_{i}=\sqrt{\frac{(p_{i}-1)k_{i}}{n-3}}f,\ \ \lambda_{i}=(-1)^{i}\sqrt{\frac{p_{j}-1}{p_{i}-1}}\frac{1}{f},
	\end{align*}
where $i\in\{1,2\}$ and $j=3-i$.
\end{theorem}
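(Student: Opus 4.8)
The theorem has two directions. The "if" direction is essentially a verification: given the explicit example \eqref{example1} with the stated $\varphi_i$ and $\lambda_i$, one checks it is Einstein, sits inside the relevant ambient, and fails to have constant sectional curvature. The "only if" direction is the substantial one, so let me focus there.

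Let me sketch my plan. By Theorem \ref{intM}, on $U$ the metric is locally a multiply warped product $g = ds^2 + \varphi_1^2 g_1 + \varphi_2^2 g_2$ with $(N_i,g_i)$ of constant curvature $k_i$; moreover by item (2), $T$ is principal with multiplicity one, so $\partial_s$ is (up to sign) the principal direction of $\lambda_n$, $\partial_t$-coordinate direction, and the coordinate $s$ should be identified with (a reparametrization of) the ambient parameter $t$ via the structure equations from \cite{Ortega}. I would first write down the curvature tensor of the multiply warped product in terms of $k_i$, $\varphi_i$, $\varphi_i'$, $\varphi_i''$ — using the formulas from \cite{wpmandlcfs} — and impose the Einstein condition $Ric = \rho g$. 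This gives a system: the $\partial_s$-component gives $-p_1 \varphi_1''/\varphi_1 - p_2 \varphi_2''/\varphi_2 = \rho$; each fiber-block gives an equation of the form $-\varphi_i''/\varphi_i - (p_i-1)\bigl((\varphi_i')^2 - k_i\bigr)/\varphi_i^2 - p_j \varphi_i'\varphi_j'/(\varphi_i\varphi_j) = \rho$ for $\{i,j\}=\{1,2\}$. Separately, the hypothesis $\lambda_n = 0$ fed into the Gauss equation (the fundamental equations of \cite{Ortega}) relates the second fundamental form — whose eigenvalues are $0, \lambda_1, \lambda_2$ — and the ambient curvature to the curvature of $M$. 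In particular, the shape operator being $\mathrm{diag}(0,\lambda_1,\ldots,\lambda_1,\lambda_2,\ldots,\lambda_2)$ together with $\lambda_n=0$ forces, via Gauss, that sectional curvatures of planes containing $\partial_s$ equal the corresponding ambient sectional curvatures; and the structure equations link $\varphi_i'/\varphi_i$ to $\lambda_i$ and to $f'/f$, $\theta$.

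The next step is to combine these. From the $\partial_s$-Einstein equation and $\lambda_n=0$ plus Gauss, I expect to extract that $\varphi_i = c_i f$ for constants $c_i$ (the warping functions of $M$ are proportional to the ambient warping function — this is where $\theta\equiv 0$ and the identification $s = t$ enters, since $\lambda_n=0$ should kill the "tilt" of $M$ relative to the slices). Substituting $\varphi_i = c_i f$ into the fiber Einstein equations and comparing the two (for $i=1$ and $i=2$) yields algebraic relations among $p_1, p_2, k_1, k_2, c_i$ and an ODE for $f$. Demanding that $M$ does \emph{not} have constant curvature rules out the degenerate branch (where all $k_i$-terms conspire to make $R$ a constant-curvature tensor), and what survives should force $c = 1$ (sign of $k_i$ must be positive for the spheres to appear, and the ambient must be spherical-fibered), $k_i > 0$, $p_i \geq 2$ (so that $N_i$ is a genuine sphere, not a point or circle — $p_i = 1$ would collapse a fiber into the one-dimensional direction and change the principal-curvature count), hence $n = 1 + p_1 + p_2 \geq 5$, and the stated formulas $\varphi_i = \sqrt{(p_i-1)k_i/(n-3)}\, f$, $\lambda_i = (-1)^i \sqrt{(p_j-1)/(p_i-1)}\,/f$, together with the ODE $(f')^2 + \tfrac{\rho}{n-1}f^2 = \tfrac{n-3}{n-2}$.

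The main obstacle I anticipate is the bookkeeping in the third step: there are several coupled equations (three Einstein equations, the Gauss equation, and the \cite{Ortega} structure equations relating $\varphi_i, f, \theta, \lambda_i$), and one must carefully track which combinations are forced and which branch corresponds to constant sectional curvature (to be discarded). Getting the signs right in $\lambda_i = (-1)^i(\cdots)$ — i.e., that $\lambda_1$ and $\lambda_2$ have opposite signs — will require care with orientations of $\mathcal N$ and the sign of $\theta$; I would handle this by noting $\lambda_1 p_1 + \lambda_2 p_2 = $ (mean curvature-type quantity), combined with $\lambda_1\lambda_2 < 0$ coming from the Gauss equation applied to a mixed $N_1$–$N_2$ plane once $\lambda_n=0$ and $c=1$ are in force. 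A secondary subtlety is ruling out $\theta \not\equiv 0$: I would show that $\lambda_n = 0$ in the structure equations forces $\theta' = 0$ or directly $\theta \equiv 0$ on $U$ (shrinking $U$ if needed), since a nonzero constant $\theta$ would be incompatible with the fiber equations unless $M$ has constant curvature.
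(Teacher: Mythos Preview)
Your overall strategy is the paper's---reduce to the multiply warped product via Theorem~\ref{intM} and Proposition~\ref{step0}, then combine the Einstein condition with Gauss---but there is a genuine gap in your argument for $\theta \equiv 0$. You write that ``$\lambda_n = 0$ in the structure equations forces $\theta' = 0$ or directly $\theta \equiv 0$.'' This is false: item~(\ref{gradtheta}) of Proposition~\ref{impTeigen} gives $\nabla\theta = -\bigl(\lambda_n + (f'/f)\theta\bigr)T$, so $\lambda_n = 0$ only yields $d\theta/ds = -(f'/f)\,|T|\,\theta$, which does not make $\theta$ constant (let alone zero) unless $f' \equiv 0$. Your fallback (``a nonzero constant $\theta$ would be incompatible with the fiber equations'') therefore does not apply, since $\theta$ is not constant a priori. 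Relatedly, your expectation that $\varphi_i = c_i f$ comes \emph{before} $\theta \equiv 0$ is backwards: by \eqref{labelfi} one has $\varphi_1'/\varphi_1 - \varphi_2'/\varphi_2 = \theta(\lambda_1 - \lambda_2)/|T|$, so both $\varphi_i$ can be proportional to the same function only \emph{after} $\theta = 0$ is in hand.

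The paper supplies the missing step as follows. From $\lambda_n = 0$ and \eqref{eq2} one gets $a + |T|^2 b = -\rho/(n-1)$, so \eqref{id1} reads $\varphi_i'' = -\tfrac{\rho}{n-1}\varphi_i$. Rewriting \eqref{id2} via the expression for $\lambda_i^2 - a$ that comes from \eqref{eq1}--\eqref{eq2}, and then differentiating using $\varphi_i'' = -\tfrac{\rho}{n-1}\varphi_i$, one obtains $\tfrac{d}{ds}(\lambda_i^2\varphi_i^2) = 0$, i.e.\ $\lambda_i\varphi_i$ is constant. Combined with $(p_1-1)\lambda_1 + (p_2-1)\lambda_2 = 0$ (again from \eqref{eq1} with $\lambda_n=0$), this forces $\varphi_2$ to be a constant multiple of $\varphi_1$; substituting into \eqref{labelfi} for $i=1,2$ and subtracting yields $\theta(\lambda_1 - \lambda_2) = 0$, hence $\theta \equiv 0$. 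Only then do $|T| \equiv 1$, $s = t$, and $\varphi_i = B_i f$ follow, after which the remaining algebra (forcing $c=1$, $k_i > 0$, and the explicit constants) goes through essentially as you outline.
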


We remark that three distinct principal curvatures do not occur neither in cylinders \cite{SantosPN} nor in space forms \cite{Ryan}. This is in contrast with example $(\ref{example1})$.

The paper is organized as follows. In Section 2 we state the structure equations for the existence of an immersion of $M^n$ in $I\times_{f}\mathbb{Q}^n(c)$. Our first result, which could be useful elsewhere, concerns hypersurfaces for which $T$ is a principal direction. In Section 3 we investigate extrinsic and intrinsic properties of $M$. We prove that $M$ has at most three distinct principal curvatures and that it is locally isometric to a multiply warped product with at most two fibers (see \cite{wpmandlcfs} for definitions). In Section 4, we prove the main theorems. We also rewrite the proof of the main result of \cite{SantosPN}. See Theorem \ref{rigidity2}.

\section{Preliminaries}\label{Definitions}

The goal of this section is to fix notation and to set the main tools used along this paper.
\subsection{Notations}
Consider the simply connected, complete, $n$-dimensional space form $\mathbb{Q}^n (c)$ having constant curvature $c\in \{-1, 0, 1\}$ and a smooth real function $f: I\subseteq \mathbb{R} \rightarrow \mathbb{R}^{+}$. The warped product $I \times_{f} \mathbb{Q}^{n}(c)$ is the product manifold $I\times \mathbb{Q}^n (c)$ endowed with metric $\langle, \rangle= dt^2+ f(t)^2 g$, where $g$ denotes the metric of $\mathbb{Q}^{n}(c)$.

Let $M^n$ be an oriented hypersurface of $I \times_{f} \mathbb{Q}^{n}(c)$ with Levi-Civita connection $\nabla$. We set our convention for the curvature operator $\mathcal{R}$ as 
$$\mathcal{R}(X, Y)Z =  \nabla_{X}\nabla_{Y}Z - \nabla_{Y}\nabla_{X}Z- \nabla_{[X, Y]}Z.$$

Let $\mathcal{N}$ be a normal unit vector field to $M$ in $I\times_{f}\mathbb{Q}^n (c)$ and let $A$ be the shape operator associated with $\mathcal{N}$. The mean curvature of $M^n$ is defined by
$$H = \dfrac{1}{n}Tr(A).$$

If $\partial_{t}$ denotes the field tangent to $I$ in $I\times \mathbb{Q}^n (c)$, then it can be decomposed at points of $M$ as
$$\partial_{t}= T+\theta \mathcal{N},$$
where $T$ is tangent to $M$ and $\theta = \langle \partial_{t}, \mathcal{N}\rangle $. The function $\theta$ is called {\it angle function}.

\subsection{Structure equations}
In this subsection we state the structure equations to obtain a Riemannian manifold $(M^n,g)$ as a hypersurface of $I\times_{f}\mathbb{Q}^n(c)$. This result was first obtained for $f$ constant in \cite{benoa}, and then for general $f$ in \cite{Ortega}.

Let $(M^{n},\left\langle ,\right\rangle)$ be a Riemannian manifold with Levi-Civita connection $\nabla$ and $(0,4)$-curvature tensor $\mathcal{R}$. We say that $M$ satisfies the {\it structure conditions} if there are: a smooth $(1,1)$-tensor $A:\mathfrak{X}(M)\rightarrow\mathfrak{X}(M)$, a constant $c\in\{-1,0,1\}$, an interval $I\subset\mathbb{R}$ and smooth functions $f:I\rightarrow(0,+\infty)$, $\theta:M\rightarrow\mathbb{R}$ and $\pi:M\rightarrow I$, with $T=\nabla\pi$, satisfying

\begin{enumerate}[label=(\Alph*)]
	\item\label{selfadj} $A$ is self adjoint with respect to $\left\langle,\right\rangle$;
	\item\label{decomp} $|T|^2+\theta^2=1$;
	\item\label{prehess} $\nabla_{X}T=\frac{f'}{f}(X-\left\langle X,T\right\rangle T)+\theta AX,\ \forall X\in\mathfrak{X}(M)$;
	\item\label{anglefunction} $X(\theta)=-\left\langle AT,X\right\rangle-\frac{f'}{f}\theta\left\langle X,T\right\rangle,\ \forall X\in\mathfrak{X}(M)$;
	\item\label{Codeq} For any $X,\ Y\in\mathfrak{X}(M)$,
	\begin{align*}
	(\nabla_{X}A)Y-(\nabla_{Y}A)X=\theta b\big(\left\langle T,X\right\rangle Y-\left\langle T,Y\right\rangle X\big);
	\end{align*}
	\item\label{Gausseq} For any $X, Y, Z, W\in TM$,
	\begin{align*} \mathcal{R}(X, Y, Z, W)&= a \big(\langle X, Z\rangle \langle Y, W\rangle  -  \langle Y, Z \rangle \langle X, W\rangle \big)\\
	&+b \big( \langle X, Z\rangle \langle Y, T\rangle \langle W, T\rangle
	- \langle Y, Z\rangle \langle X, T\rangle \langle W, T\rangle \\
	& - \langle X, W\rangle \langle Y, T\rangle \langle Z, T\rangle  + \langle Y, W\rangle \langle X, T\rangle \langle Z, T\rangle \big)\\
	& + \langle AY, Z\rangle  \langle AX, W\rangle  - \langle AY, W\rangle \langle AX, Z\rangle,
	\end{align*}
\end{enumerate}
where
\begin{align}\label{defint.a}
a=\frac{(f')^2-c}{f^2},\ \ \ \ \ \ b=\frac{f''}{f}-\frac{(f')^2}{f^2}+ \frac{c}{f^2}.
\end{align}

We observe that $b$ vanishes identically if and only if $I\times_{f}\mathbb{Q}^{n}(c)$ has constant sectional curvature (see Proposition 2 of \cite{Ortega}).

The result below is the main tool in investigating hypersurfaces in the warped product $I\times_{f}\mathbb{Q}^{n}(c)$. Its proof can be found in \cite{Ortega}.

\begin{theorem}[\cite{Ortega}]\label{tensorR}
	Let $(M^n,\left\langle,\right\rangle)$ be a Riemannian manifold satisfying the structure conditions. Then, for each point $p\in M$, there exists a neighborhood $U$ of $p$ in $M$, an isometric immersion $\chi:U\rightarrow I\times_{f}\mathbb{Q}^{n}(c)$ and a normal vector field $\mathcal{N}$ along $\chi$ such that
	\begin{enumerate}
		\item $\mathcal{N}$ has length one in $I\times_{f}\mathbb{Q}^{n}(c)$;
		\item $\pi_{1}\circ\chi=\pi$, where $\pi_{1}:I\times_{f}\mathbb{Q}^{n}(c)\rightarrow I$ is the projection in the first component;
		\item The shape operator associated with $\mathcal{N}$ is $A$;
		\item $\ref{Codeq}$ is the Codazzi equation and $\ref{Gausseq}$ is the Gauss equation;
		\item Along $\chi$ one has $\partial_{t}=T+\theta\mathcal{N}$.
	\end{enumerate}
	Conversely, if such immersion exists, then $M$ satisfies the structure equations.
\end{theorem}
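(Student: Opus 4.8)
The statement is a Bonnet-type (fundamental) theorem: conditions \ref{selfadj}--\ref{Gausseq} are the compatibility equations forced on any hypersurface of $I\times_{f}\mathbb{Q}^{n}(c)$, and the real content is that they are also sufficient for local realizability. The converse direction is routine, and I would dispose of it first. On a genuine hypersurface, \ref{selfadj} is the symmetry of the shape operator, while \ref{Gausseq} and \ref{Codeq} are the Gauss and Codazzi equations once the curvature tensor $\widetilde{\mathcal{R}}$ of the ambient has been substituted, and \ref{decomp} merely records $|\partial_{t}|^{2}=1$. The point worth isolating is that $\partial_{t}$ is a closed conformal field on $I\times_{f}\mathbb{Q}^{n}(c)$, obeying $\widetilde{\nabla}_{Z}\partial_{t}=\frac{f'}{f}(Z-\langle Z,\partial_{t}\rangle\partial_{t})$; writing this for $Z=X\in TM$ and splitting into tangential and normal parts through the Gauss and Weingarten formulas together with $\partial_{t}=T+\theta\mathcal{N}$ gives exactly \ref{prehess} and \ref{anglefunction}. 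Thus everything reduces to the existence direction.

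For existence I would use the moving-frame (Frobenius) method. Fix $p\in M$ and a local orthonormal frame $e_{1},\dots,e_{n}$ near $p$. Let $\mathcal{F}$ be the bundle over $M$ whose fibre at $q$ consists of pairs $(\tilde{q},\phi)$ with $\tilde{q}\in I\times_{f}\mathbb{Q}^{n}(c)$ having $I$-coordinate $\pi(q)$ and $\phi\colon T_{q}M\oplus\mathbb{R}\to T_{\tilde{q}}(I\times_{f}\mathbb{Q}^{n}(c))$ a linear isometry; write $E_{i}=\phi(e_{i})$ and let $\mathcal{N}$ be the image of a fixed unit generator of the $\mathbb{R}$ summand. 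On $\mathcal{F}$ define the rank-$n$ distribution $\mathcal{D}$ spanned by lifts $\hat{e}_{1},\dots,\hat{e}_{n}$ of the $e_{i}$, characterised by $d\tilde{q}(\hat{e}_{i})=E_{i}$ together with the transport laws
\begin{align*}
\widetilde{\nabla}_{\hat{e}_{i}}E_{j}=\sum_{k}\langle\nabla_{e_{i}}e_{j},e_{k}\rangle E_{k}+\langle Ae_{i},e_{j}\rangle\mathcal{N},\qquad\widetilde{\nabla}_{\hat{e}_{i}}\mathcal{N}=-\sum_{j}\langle Ae_{i},e_{j}\rangle E_{j}.
\end{align*}
An integral leaf of $\mathcal{D}$ is the graph of an isometric immersion $\chi$ equipped with an adapted orthonormal frame whose last vector $\mathcal{N}$ is a unit normal with shape operator $A$; so it remains to prove that $\mathcal{D}$ is involutive and to propagate the relation $\partial_{t}=\chi_{*}T+\theta\mathcal{N}$.

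The involutivity of $\mathcal{D}$ is where all of \ref{selfadj}--\ref{Gausseq} are consumed and is the main obstacle. Evaluating $[\hat{e}_{i},\hat{e}_{j}]$ and projecting onto the fibre of $\mathcal{F}$, the torsion-free and metric properties make the base and frame-position components close automatically, while the transport component returns the genuine ambient curvature $\widetilde{\mathcal{R}}(E_{i},E_{j})$, which must coincide with the operator rebuilt from $\nabla$ and $A$. To check this I need the explicit curvature of $I\times_{f}\mathbb{Q}^{n}(c)$; by O'Neill's warped-product formulas it is governed by the two functions $a$ and $b$ of \eqref{defint.a}, with the asymmetry between base and fibre carried entirely by $\partial_{t}$. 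Matching the tangential part of $\widetilde{\mathcal{R}}(E_{i},E_{j})E_{k}$ reproduces the Gauss condition \ref{Gausseq}, and matching the $\mathcal{N}$-component reproduces the Codazzi condition \ref{Codeq}; this is the most delicate bookkeeping, because the $b$-terms are exactly those carrying the tangential part $T$ of $\partial_{t}$. It is therefore essential that $\partial_{t}=\phi(T)+\theta\mathcal{N}$ be a first integral of $\mathcal{D}$: differentiating $\langle E_{i},\partial_{t}\rangle-\langle e_{i},T\rangle$ and $\langle\mathcal{N},\partial_{t}\rangle-\theta$ along $\hat{e}_{k}$ and invoking \ref{prehess} and \ref{anglefunction} shows both vanish identically provided they vanish at one point, which \ref{decomp} and the initial choice guarantee.

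Granting involutivity, the Frobenius theorem produces a unique integral leaf through an initial point $(p,\tilde{q}_{0},\phi_{0})$, where $\tilde{q}_{0}$ is chosen with $I$-coordinate $\pi(p)$ and $\phi_{0}$ so that $\partial_{t}|_{\tilde{q}_{0}}=\phi_{0}(T_{p})+\theta(p)\mathcal{N}_{0}$, which is possible precisely because of \ref{decomp}. This leaf is a graph over a neighbourhood $U$ of $p$, and its $\tilde{q}$-component is the sought immersion $\chi\colon U\to I\times_{f}\mathbb{Q}^{n}(c)$ with unit normal $\mathcal{N}$ and shape operator $A$, yielding conclusions (1), (3), (4) and (5). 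For (2), I would differentiate: by (5) and $T=\nabla\pi$ one has $d(\pi_{1}\circ\chi)(X)=\langle\chi_{*}X,\partial_{t}\rangle=\langle X,T\rangle=d\pi(X)$ for all $X$, so $\pi_{1}\circ\chi$ and $\pi$ differ by a constant; the normalisation $\pi_{1}(\tilde{q}_{0})=\pi(p)$ forces it to vanish, giving $\pi_{1}\circ\chi=\pi$.
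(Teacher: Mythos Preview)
The paper does not actually prove this theorem: immediately before the statement it says ``Its proof can be found in \cite{Ortega}'', and nothing further is given. So there is no ``paper's own proof'' to compare against; Theorem~\ref{tensorR} is quoted from Lawn--Ortega as a black box on which the rest of the paper relies.

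That said, your sketch is a sound outline of how such Bonnet-type results are established and is in the spirit of the cited source (which in turn follows Daniel \cite{benoa}). A couple of small remarks. First, the field you call ``closed conformal'' is really $f\partial_{t}$, not $\partial_{t}$; the formula $\widetilde{\nabla}_{Z}\partial_{t}=\frac{f'}{f}(Z-\langle Z,\partial_{t}\rangle\partial_{t})$ you wrote is nonetheless correct and is exactly what is needed to derive \ref{prehess} and \ref{anglefunction}. Second, in the Frobenius set-up the constraint $\pi_{1}(\tilde q)=\pi(q)$ should not be imposed on the fibre of $\mathcal{F}$ a priori, since that would overdetermine the distribution; rather, as you correctly do at the end, one lets $\tilde q$ range freely, proves involutivity, and then recovers $\pi_{1}\circ\chi=\pi$ a posteriori from (5) and $T=\nabla\pi$. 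With those adjustments your argument is essentially the standard one.
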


By tracing \ref{Gausseq} we obtain as a consequence of Theorem \ref{tensorR} the following.

\begin{corollary}\label{Riccitensor}
The Ricci tensor $Ric$ of $M$ in $I\times_{f} \mathbb{Q}^{n}(c)$ is given by
\begin{align*}
Ric(X,Y)=&-((n-1)a+|T|^2 b)\langle X,Y\rangle-(n-2)b\langle X,T\rangle \langle Y,T\rangle\\&\ \ \ \ \ +nH\langle AX,Y\rangle-\langle AX,AY\rangle.
\end{align*}
\end{corollary}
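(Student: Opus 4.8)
The statement to prove is Corollary~\ref{Riccitensor}, obtained by tracing the Gauss equation~\ref{Gausseq}.

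\bigskip

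The plan is to compute the Ricci tensor directly by contracting the $(0,4)$-curvature tensor given in \ref{Gausseq}. Recall that $Ric(X,Y)=\sum_{i=1}^{n}\mathcal{R}(e_{i},X,Y,e_{i})$ for a local orthonormal frame $\{e_{i}\}$ of $M$; I would fix such a frame and trace the four groups of terms in \ref{Gausseq} one by one, setting $Z=Y$ and $W=e_{i}$ and $X=e_{i}$ in the notation of that display (being careful with the slot ordering and signs in the author's curvature convention).

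\bigskip

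First I would handle the purely metric term $a\big(\langle X,Z\rangle\langle Y,W\rangle-\langle Y,Z\rangle\langle X,W\rangle\big)$: summing over $i$ gives $a\big(n\langle X,Y\rangle-\langle X,Y\rangle\big)=(n-1)a\langle X,Y\rangle$, which after accounting for the sign convention contributes $-(n-1)a\langle X,Y\rangle$. Next, the four $b$-terms involving $T$: tracing these produces terms of the form $\langle X,Y\rangle|T|^{2}b$ and $\langle X,T\rangle\langle Y,T\rangle b$; careful bookkeeping of the coefficients yields the combination $-|T|^{2}b\langle X,Y\rangle-(n-2)b\langle X,T\rangle\langle Y,T\rangle$ (using $\sum_i\langle e_i,T\rangle e_i=T$ and $|T|^2=\sum_i\langle e_i,T\rangle^2$). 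Finally, for the shape-operator terms $\langle AY,Z\rangle\langle AX,W\rangle-\langle AY,W\rangle\langle AX,Z\rangle$, summing over $i$ with $W=e_i=X$ gives $\mathrm{Tr}(A)\langle AX,Y\rangle-\langle AX,AY\rangle=nH\langle AX,Y\rangle-\langle AX,AY\rangle$, using self-adjointness of $A$ from \ref{selfadj} and the definition $H=\frac{1}{n}\mathrm{Tr}(A)$. Adding the four contributions gives the stated formula.

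\bigskip

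There is no real obstacle here; the only thing requiring care is the sign and slot convention, since the curvature operator is defined with $\mathcal{R}(X,Y)Z=\nabla_X\nabla_Y Z-\nabla_Y\nabla_X Z-\nabla_{[X,Y]}Z$ and one must decide consistently which pair of indices is traced to obtain $Ric$. Once the convention is pinned down, the computation is a routine contraction, and the identities $\sum_i\langle e_i,T\rangle e_i=T$ and $\mathrm{Tr}(A)=nH$ close the argument.
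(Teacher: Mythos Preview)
Your approach is correct and is exactly what the paper does: the corollary is stated immediately after Theorem~\ref{tensorR} with the one-line justification ``by tracing \ref{Gausseq},'' and your contraction of each group of terms in \ref{Gausseq} recovers the claimed formula. The only minor looseness is in the first step, where the sign $-(n-1)a$ comes out directly from substituting $X\to e_i$, $W\to e_i$ in \ref{Gausseq} rather than from a separate sign adjustment, but your final expression is correct.
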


\subsection{Hypersurfaces of $I\times_{f} \mathbb{Q}^{n}(c)$ for which $T$ is a principal direction}\label{T_eigen}

In this subsection we investigate hypersurfaces of the warped product $I\times_{f}\mathbb{Q}^{n}(c)$ which have $T\neq0$ as principal direction on an open set.

\begin{proposition}\label{impTeigen}
	Let $M^n$ be a hypersurface of $I\times_{f} \mathbb{Q}^{n}(c)$ and $U\subset M$ an open set. Assume that $T$ is an eigenvector of $A:\mathfrak{X}(M)\rightarrow\mathfrak{X}(M)$, the shape operator of M, satisfying $AT=\lambda_{n}T$ on $U$. The following is true on $U$:
	\begin{enumerate}
		\item\label{integofT} The integral curves of $\frac{T}{|T|}$ are geodesics;
		\item\label{gradlambn} $\lambda_{n}$ is smooth and $\nabla\lambda_{n}=\left(\nabla_{\frac{T}{|T|}}A\right)\left(\frac{T}{|T|}\right)=\left\langle\frac{T}{|T|},\nabla\lambda_{n}\right\rangle\frac{T}{|T|}$;
		\item\label{gradtheta} $\nabla\theta=-\left(\lambda_{n}+\frac{f'}{f}\theta\right)$T.
	\end{enumerate}
	In particular, $|T|^2$, $\theta$ and $\lambda_{n}$ are constant on each connected component of regular levels of $\pi$ contained in $U$.
\end{proposition}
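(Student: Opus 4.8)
The plan is to extract everything from the structure conditions \ref{prehess}, \ref{anglefunction}, \ref{Codeq}, specialized to the hypothesis $AT = \lambda_n T$. First I would establish item \eqref{integofT}: applying \ref{prehess} with $X = T$ gives $\nabla_T T = \frac{f'}{f}(T - |T|^2 T) + \theta A T = \frac{f'}{f}\theta^2 T + \theta \lambda_n T$, using \ref{decomp} in the form $1 - |T|^2 = \theta^2$. Thus $\nabla_T T$ is a multiple of $T$, which is precisely the statement that the integral curves of $T/|T|$ are (unparametrized) geodesics; a short computation reparametrizing by arc length, or equivalently checking that $\nabla_{T/|T|}(T/|T|)$ is orthogonal to $T/|T|$ since $|T/|T||\equiv 1$ forces it to vanish, gives the geodesic claim. (One must note $|T|>0$ on $U$ so $T/|T|$ is a well-defined smooth unit field.)

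Next, for item \eqref{gradtheta}, I would simply rewrite \ref{anglefunction}: $X(\theta) = -\langle AT, X\rangle - \frac{f'}{f}\theta\langle X, T\rangle = -\langle \lambda_n T, X\rangle - \frac{f'}{f}\theta \langle T, X\rangle = -\langle (\lambda_n + \frac{f'}{f}\theta) T, X\rangle$ for all $X$, which is exactly $\nabla\theta = -(\lambda_n + \frac{f'}{f}\theta)T$. This step needs no cleverness beyond recognizing that the right-hand side of \ref{anglefunction} is already linear in $X$ against a fixed vector.

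The substantive step is item \eqref{gradlambn}, and this is where I expect the main obstacle. Smoothness of $\lambda_n$: on the open set where $T\neq 0$, $\lambda_n = \langle AT, T\rangle / |T|^2$ is a quotient of smooth functions with nonvanishing denominator, hence smooth. For the gradient formula I would differentiate the defining relation $AT = \lambda_n T$. For an arbitrary $X$, write $\nabla_X(AT) = (\nabla_X A)T + A(\nabla_X T)$ and $\nabla_X(\lambda_n T) = X(\lambda_n)T + \lambda_n \nabla_X T$; substituting \ref{prehess} for $\nabla_X T$ and using $AT = \lambda_n T$ to simplify $A(\nabla_X T)$, the terms proportional to $X$ and to $T$ must be handled, and then I would invoke the Codazzi equation \ref{Codeq} to replace $(\nabla_X A)T - (\nabla_T A)X$ by $\theta b(\langle T,X\rangle T - |T|^2 X)$. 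Taking $X \perp T$ and comparing components parallel and orthogonal to $T$ should force $(\nabla_T A)X$ to have no component along $T$ for such $X$ — i.e. the off-diagonal block of $\nabla_T A$ in the splitting $\mathbb{R}T \oplus T^\perp$ vanishes — and pairing the original differentiated identity against $T$ should yield $X(\lambda_n)|T|^2 = \langle (\nabla_X A)T, T\rangle$; combined with Codazzi and self-adjointness of $A$ and of $\nabla_X A$, this collapses to $X(\lambda_n) = 0$ for $X\perp T$, giving $\nabla\lambda_n \parallel T$, and the remaining identity $\nabla_{T/|T|}A(T/|T|) = \langle T/|T|, \nabla\lambda_n\rangle\, T/|T|$ follows by evaluating the $X = T/|T|$ case. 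The bookkeeping of which terms survive in the $\mathbb{R}T \oplus T^\perp$ decomposition is the part requiring care; the structure conditions supply every needed ingredient, so it is a matter of organizing the computation rather than finding a new idea.

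Finally, for the "in particular" clause: a regular level set of $\pi$ has tangent space $T^\perp = (\nabla\pi)^\perp$, so constancy of $|T|^2$, $\theta$, and $\lambda_n$ along connected components of such a level is immediate from \ref{decomp} together with the facts just proved — $\nabla\theta \parallel T$ and $\nabla\lambda_n \parallel T$ are orthogonal to the level set, hence these functions are locally constant on it, and then $|T|^2 = 1 - \theta^2$ is too. I would close by remarking that $\lambda_n$ being constant on regular levels is consistent with \eqref{gradlambn}.
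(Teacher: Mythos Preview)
Your proofs of items \eqref{integofT} and \eqref{gradtheta}, and of the smoothness of $\lambda_n$, are essentially identical to the paper's. For item \eqref{gradlambn}, your overall strategy (differentiate, apply Codazzi, use self-adjointness of $\nabla A$) also matches the paper's, but one step in your organization is circular.

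You claim that for $X\perp T$, ``comparing components parallel and orthogonal to $T$'' of the differentiated identity $(\nabla_X A)T + A(\nabla_X T) = X(\lambda_n)T + \lambda_n\nabla_X T$, after substituting Codazzi, forces $\langle(\nabla_T A)X, T\rangle = 0$. But it does not. For $X\perp T$ one has $\nabla_X T = \tfrac{f'}{f}X + \theta AX \in T^\perp$ (since $\langle AX,T\rangle = \langle X,AT\rangle = 0$), and $A$ preserves $T^\perp$; so the $T$-component of the identity reads $\langle(\nabla_X A)T, T\rangle = X(\lambda_n)|T|^2$. Codazzi with $X\perp T$ gives $(\nabla_X A)T = (\nabla_T A)X - \theta b|T|^2 X$, whose $T$-component is just $\langle(\nabla_T A)X, T\rangle$. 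Hence all you extract is $\langle(\nabla_T A)X, T\rangle = X(\lambda_n)|T|^2$, which is exactly the statement you are trying to reduce to zero.

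The fix, and what the paper does, is to feed item \eqref{integofT} back in: since $\nabla_T T$ is a scalar multiple of $T$, one has $A(\nabla_T T) = \lambda_n\nabla_T T$, so
\[
(\nabla_T A)T \;=\; \nabla_T(AT) - A(\nabla_T T) \;=\; T(\lambda_n)\,T + \lambda_n\nabla_T T - \lambda_n\nabla_T T \;=\; T(\lambda_n)\,T,
\]
which shows $(\nabla_T A)T\parallel T$ \emph{without} assuming $X(\lambda_n)=0$. Combining this with the identity $|T|^2\nabla\lambda_n = (\nabla_T A)T$ (which you have via pairing against $T$, Codazzi, and self-adjointness of $\nabla_T A$) gives $\nabla\lambda_n\parallel T$. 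The paper phrases this in normalized form, computing $(\nabla_{T/|T|}A)(T/|T|) = \nabla_{T/|T|}(\lambda_n\, T/|T|)$ using that $T/|T|$ is geodesic, but it is the same computation. Everything else in your plan is correct.
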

\begin{proof}
	Using \ref{decomp} and \ref{prehess} we have
	\begin{align*}
	\nabla_{T}T=\frac{f'}{f}(T-|T|^2 T)+\theta AT=\theta\left(\lambda_{n}+\frac{f'}{f}\theta\right)T,
	\end{align*}
	what implies that the integral curves of $\frac{T}{|T|}$ are geodesics, proving $(\ref{integofT})$.
	
	In order to prove $(\ref{gradlambn})$ we start noting that since $\lambda_{n}$ can be written as
	\begin{align*}
	\lambda_{n}=\frac{\left\langle AT,T\right\rangle}{|T|^2},
	\end{align*}
	it is smooth. To prove the first equality of $(\ref{gradlambn})$ we start differentiating the identity above with respect to $X\in\mathfrak{X}(M)$ to obtain
	\begin{align*}
	|T|^4\left\langle\nabla\lambda_{n},X\right\rangle&=(\left\langle \nabla_{X}(AT),T\right\rangle+\left\langle AT,\nabla_{X}T\right\rangle)|T|^2-2\left\langle\nabla_{X}T,T\right\rangle\left\langle AT,T\right\rangle\\
	&=(\left\langle \nabla_{X}(AT),T\right\rangle-\lambda_{n}\left\langle T,\nabla_{X}T\right\rangle)|T|^2,
	\end{align*}
	which gives
	\begin{align*}
	|T|^2\left\langle\nabla\lambda_{n},X\right\rangle=\left\langle \nabla_{X}(AT),T\right\rangle-\left\langle AT,\nabla_{X}T\right\rangle.
	\end{align*}
	On the other hand, it follows from \ref{Codeq} that
	\begin{align*}
	\left\langle \nabla_{X}(AT),T\right\rangle&=\left\langle(\nabla_{X}A)T+A(\nabla_{X}T),T\right\rangle\\
	&=\left\langle(\nabla_{T}A)X,T\right\rangle+\theta b\left\langle \left\langle T,X\right\rangle T-|T|^2 X,T\right\rangle+\left\langle \nabla_{X}T,AT\right\rangle\\
	&=\left\langle(\nabla_{T}A)T,X\right\rangle+\left\langle \nabla_{X}T,AT\right\rangle,
	\end{align*}
	where in the last line we have used the identity $\left\langle(\nabla_{Z}A)X,Y\right\rangle=\left\langle(\nabla_{Z}A)Y,X\right\rangle$, $\forall X,Y,Z\in\mathfrak{X}(M)$. Consequently, $|T|^2\left\langle\nabla\lambda_{n},X\right\rangle=\left\langle(\nabla_{T}A)T,X\right\rangle,$ as we claimed.
	
	In order to finish the proof of $(\ref{gradlambn})$ notice that since the integral curves of $\frac{T}{|T|}$ are geodesics, we have
	\begin{equation*}
		\left(\nabla_{\frac{T}{|T|}}A\right)\left(\frac{T}{|T|}\right)=\nabla_{\frac{T}{|T|}}\left(A\left(\frac{T}{|T|}\right)\right)
		=\nabla_{\frac{T}{|T|}}\left(\lambda_{n}\frac{T}{|T|}\right)
		=\frac{\left\langle T,\nabla\lambda_{n}\right\rangle}{|T|^2}T,
	\end{equation*}
	which is the second equality of $(\ref{gradlambn})$.
	
	Now we prove $(\ref{gradtheta})$. Using identity \ref{anglefunction} we have
	\begin{align*}
	\left\langle\nabla\theta,X\right\rangle=X(\theta)=-\left\langle AT+\frac{f'}{f}\theta T,X\right\rangle=\left\langle-\left(\lambda_{n}+\frac{f'}{f}\theta\right)T,X\right\rangle,
	\end{align*}
	for any $X\in\mathfrak{X}(M)$, which implies $(\ref{gradtheta})$.
\end{proof}

\section{Einstein hypersurfaces of $I\times_{f} \mathbb{Q}^{n}(c)$}

In this section we investigate Einstein hypersurfaces of $I\times_{f} \mathbb{Q}^{n}(c)$ assuming $b\neq0$. This is equivalent to saying that $I\times_{f} \mathbb{Q}^{n}(c)$ does not have constant sectional curvature (see Proposition 2 of \cite{Ortega}). A complete local classification of Einstein hypersurfaces in spaces of constant sectional curvature can be found in \cite{Ryan}. Recall that as three dimensional Einstein manifolds have constant sectional curvature, we will assume from now on that $n\geq4$.

\subsection{Extrinsic structure of $M$}
In this subsection we investigate properties of the principal curvatures of the immersion on an open set $U\subset M$ where $T$ and $b$ do not vanish. We start with the following proposition which, in particular, allows us to use Proposition \ref{impTeigen}.

\begin{proposition}\label{Tdiag}
	Let $M^n$, $n\ge 4$, be an Einstein hypersurface of $I\times_{f} \mathbb{Q}^{n}(c)$. If $T\neq 0$ and $b\neq 0$ at $x_{0}\in M^n$, then $T$ is an eigenvector of the shape operator $A$ at $x_{0}$. Furthermore, $A$ has at least $2$ and most $3$ distinct principal curvatures at $x_{0}$, say, $\lambda_{1}, \lambda_{2}$ and $\lambda_{n}$, which satisfy
	\begin{align}
		&\lambda_{i}^2-nH\lambda_{i}+\rho+(n-1)a+|T|^2 b=0,\ i\in\{1,2\},\label{eq1}\\
 		&\lambda_{n}^2-nH\lambda_{n}+\rho+(n-1)(a+|T|^2 b)=0.\label{eq2}
	\end{align}
	In particular, the multiplicity of $\lambda_{n}$ as an eigenvalue of $A$ at $x_{0}$ is $1$.
\end{proposition}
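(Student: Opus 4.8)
The strategy is to turn the Einstein condition into a single algebraic identity for the shape operator at $x_0$ and then do linear algebra on $T_{x_0}M$. Substituting $Ric=\rho\langle\cdot,\cdot\rangle$ into the formula of Corollary~\ref{Riccitensor}, one gets, for all $X,Y\in T_{x_0}M$,
\[
\langle AX,AY\rangle - nH\langle AX,Y\rangle + \bigl(\rho+(n-1)a+|T|^2b\bigr)\langle X,Y\rangle = -(n-2)b\,\langle X,T\rangle\langle Y,T\rangle .
\]
In other words, the self-adjoint operator $P:=A^2-nHA+\bigl(\rho+(n-1)a+|T|^2b\bigr)\mathrm{Id}$ is the rank-one operator $X\mapsto -(n-2)b\langle X,T\rangle T$. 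Since $P$ is a polynomial in $A$ it commutes with $A$, and this commutation, together with an understanding of the spectrum of $P$, will give everything.

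Concretely, because $b\neq0$, $T\neq0$ and $n\geq4$, the operator $P$ has the eigenvalue $-(n-2)b|T|^2\neq0$ on the line $\mathbb{R}T$ and the eigenvalue $0$ on $T^{\perp}$ (of dimension $n-1$). As $A$ preserves the eigenspaces of $P$, it preserves $\mathbb{R}T$, so $T$ is a principal direction; write $AT=\lambda_nT$. Applying $P$ to $T$ gives $\lambda_n^2-nH\lambda_n+\rho+(n-1)a+|T|^2b=-(n-2)b|T|^2$, which rearranges to \eqref{eq2} once one notes $(n-1)a+|T|^2b+(n-2)|T|^2b=(n-1)(a+|T|^2b)$. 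If instead $X\in T^{\perp}$ is a unit eigenvector of $A$, say $AX=\lambda X$, then $PX=0$ yields $\lambda^2-nH\lambda+\rho+(n-1)a+|T|^2b=0$, i.e. \eqref{eq1}; being a quadratic in $\lambda$, this has at most two roots $\lambda_1,\lambda_2$.

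It remains to count. The value $\lambda_n$ satisfies \eqref{eq2} but cannot satisfy \eqref{eq1}: subtracting the two would force $(n-2)|T|^2b=0$, which is excluded since $n\geq4$, $T\neq0$, $b\neq0$. Hence $\lambda_n\neq\lambda_1,\lambda_2$ and $A$ has at most three distinct principal curvatures; moreover the $\lambda_n$-eigenspace cannot meet $T^{\perp}$ (whose nonzero vectors are eigenvectors for roots of \eqref{eq1}), so it is exactly $\mathbb{R}T$ and $\lambda_n$ has multiplicity $1$. Since $T^{\perp}$ is nonzero, at least one of $\lambda_1,\lambda_2$ actually occurs, so $A$ has at least two distinct principal curvatures. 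I expect the only delicate point to be the bookkeeping around the coefficients: one must check that the rank-one term contributes exactly the extra $(n-2)|T|^2b$ that distinguishes the constant term $(n-1)(a+|T|^2b)$ in \eqref{eq2} from $(n-1)a+|T|^2b$ in \eqref{eq1}, and keep track of where $n\geq4$ (really just $n-2\neq0$ and $\dim T^{\perp}\geq1$) is used.
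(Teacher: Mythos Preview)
Your proof is correct and is essentially the same argument as the paper's, only packaged in coordinate-free language: the paper diagonalizes $A$ in an orthonormal eigenbasis $\{e_i\}$, writes $T=\sum t_ie_i$, and reads off from the Einstein condition that $t_it_j=0$ for $i\neq j$; you encode the same identity as $P:=A^2-nHA+(\rho+(n-1)a+|T|^2b)\mathrm{Id}$ being the rank-one operator $-(n-2)b\langle\cdot,T\rangle T$ and use that $P$ commutes with $A$. The remaining steps (deriving \eqref{eq1}--\eqref{eq2}, bounding the number of principal curvatures, and checking $\lambda_n$ has multiplicity one via the $(n-2)|T|^2b\neq0$ discrepancy) are identical in content and order to the paper's.
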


\begin{proof}
	Let $\{e_{i}\}_{i=1}^{n}$ be an orthonormal basis of principal directions at $x_{0}\in M$, with $Ae_{i} = \lambda_{i}e_{i}$. If we write $T= \sum_{i=1}^{n} t_{i}e_{i}$, Corollary \ref{Riccitensor} gives the equality
	\begin{align*}
	R_{ij}&=-(n-1)a\delta_{ij} - b\Big(|T|^2 \delta_{ij}+ (n-2)t_{i}t_{j}\Big) + nH\lambda_{i}\delta_{ij} - \lambda_{i}\lambda_{j}\delta_{ij}.
	\end{align*}
	Since $M^n$ is an Einstein hypersurface satisfying $Ric=\rho g$, we must have
	\begin{align}\label{A7}
	\left[\lambda_{i}\lambda_{j}-nH\lambda_{i}+\rho+(n-1)a+|T|^2 b\right]\delta_{ij}=-b(n-2)t_{i}t_{j}.
	\end{align}
	Hence, we conclude that $t_{i}t_{j}=0$ for all $i\neq j$. As $T\neq 0$ at $x_{0}$, there is only one coefficient $t_{k}\neq 0$. Without loss of generality, assume that $k=n$. Thus, $AT=\lambda_{n}T$ at $x_{0}$. Then, it follows from $(\ref{A7})$ that $(\ref{eq1})$ and $(\ref{eq2})$ hold. Now, since all $\lambda_{i},\ i\in\{1,\ldots,n-1\},$ satisfy the second degree equation $(\ref{eq1})$, there are at most two of them, say, $\lambda_{1}$ and $\lambda_{2}$. Since $n>3$, $b\neq0$ and $T\neq0$, $(\ref{eq1})$ and $(\ref{eq2})$ imply that $\lambda_{1}\neq\lambda_{n}$ and $\lambda_{2}\neq\lambda_{n}$, which finishes the proof.
\end{proof}

As $T$ is a principal direction of $M$, Proposition \ref{impTeigen} says that $\lambda_{n}$ and $|T|^2$ are constant on the connected regular levels of $\pi$. The next proposition asserts that the same is true for $\lambda_{1}$ and $\lambda_{2}$. This is a consequence of $(\ref{eq1})$ and $(\ref{eq2})$.

\begin{proposition}\label{lambdannot0}
	Let $M^n$ be an Einstein hypersurface of $I\times_{f}\mathbb{Q}^{n}(c)$. Assume that $U\subset M$ is an open set where $T\neq 0$ and $b\neq 0$, and $\Sigma_{t}$ is a connected level of $\pi$ inside $U$. Then the principal curvatures of $M\subset I\times_{f}\mathbb{Q}^{n}(c)$ are constant on $\Sigma_{t}$. If there is, in addition, $x_{0}\in\Sigma_{t}$ so that $\lambda_{n}(x_{0})=0$, then:
	\begin{enumerate}
		\item If $b(x_{0})<0$, then $p_{1}=n-1$, $p_{2}=0$ on $\Sigma_{t}$.
		\item If $b(x_{0})>0$, then $p_{1}$ and $p_{2}$ are constant on $\Sigma_{t}$, $p_{1}\geq2$, $p_{2}\geq2$ and $n>4$.
	\end{enumerate}
	Here, $p_{i}=p_{i}(x)$ denotes the multiplicity of $\lambda_{i}(x)$, $i\in\{1,2\}$.
\end{proposition}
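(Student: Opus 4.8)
The plan is to substitute the algebraic relations $(\ref{eq1})$ and $(\ref{eq2})$ into the trace identity for $A$, keeping careful track of the \emph{integer} multiplicities $p_{1}$ and $p_{2}$, and to use Proposition \ref{impTeigen} — which applies on $U$ because $T$ is a principal direction there by Proposition \ref{Tdiag} — to record what is already constant along $\Sigma_{t}$. By Proposition \ref{impTeigen}, $\lambda_{n}$ and $|T|^{2}$ are constant on $\Sigma_{t}$; since $a$ and $b$ depend only on $\pi$ by $(\ref{defint.a})$ and $\rho$ is a constant, the quantities $q:=\rho+(n-1)a+|T|^{2}b$ and $q':=\rho+(n-1)(a+|T|^{2}b)=q+(n-2)|T|^{2}b$ are constant on $\Sigma_{t}$ as well. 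Moreover $(\ref{eq1})$ says that $\lambda_{1}$ and $\lambda_{2}$ are roots of $X^{2}-nHX+q=0$, while taking the trace of $A$ gives $nH=\lambda_{n}+p_{1}\lambda_{1}+p_{2}\lambda_{2}$ with $p_{1}+p_{2}=n-1$; in particular, whenever $M$ has three distinct principal curvatures at a point, Vieta's formula $\lambda_{1}+\lambda_{2}=nH$ combined with the trace identity yields $\lambda_{n}+(p_{1}-1)\lambda_{1}+(p_{2}-1)\lambda_{2}=0$.

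For the constancy of the principal curvatures, first suppose $\lambda_{n}$ is not identically zero on $\Sigma_{t}$; being constant there it is a nonzero constant, so $(\ref{eq2})$ gives $nH=(\lambda_{n}^{2}+q')/\lambda_{n}$, again constant on $\Sigma_{t}$. Then $\lambda_{1}$ and $\lambda_{2}$ lie among the two roots of a fixed quadratic, so the principal curvatures take values in a fixed finite set, and being continuous on the connected set $\Sigma_{t}$ they are constant. Now suppose $\lambda_{n}\equiv0$ on $\Sigma_{t}$; this is exactly the situation of items $(1)$ and $(2)$, since $\lambda_{n}(x_{0})=0$ forces $\lambda_{n}\equiv0$ on $\Sigma_{t}$. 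Here $(\ref{eq2})$ forces $q'=0$, hence $q=-(n-2)|T|^{2}b$, which is nonzero with sign opposite to that of $b$. Also, substituting $\lambda_{n}=0$ and $nH=(n-1)\lambda_{1}$ into $(\ref{eq1})$ shows that if $T^{\perp}$ carries a single eigenvalue $\lambda_{1}$ then $\lambda_{1}^{2}=q/(n-2)$.

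If $b<0$ on $\Sigma_{t}$, then $q>0$. On the open subset of $\Sigma_{t}$ where $M$ has three distinct principal curvatures one would have $\lambda_{n}+(p_{1}-1)\lambda_{1}+(p_{2}-1)\lambda_{2}=0$ with $\lambda_{n}=0$, $\lambda_{1}\lambda_{2}=q>0$ (so $\lambda_{1},\lambda_{2}$ nonzero of the same sign) and $p_{1},p_{2}\geq1$; this forces $p_{1}=p_{2}=1$, contradicting $p_{1}+p_{2}=n-1\geq3$. Hence $M$ has exactly two distinct principal curvatures everywhere on $\Sigma_{t}$ (at least two by Proposition \ref{Tdiag}), i.e. $p_{1}=n-1$, $p_{2}=0$, and then $\lambda_{1}^{2}=q/(n-2)>0$ is constant, so $\lambda_{1}$ is constant on $\Sigma_{t}$; this is item $(1)$. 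If $b>0$ on $\Sigma_{t}$, then $q<0$, so $T^{\perp}$ cannot carry a single eigenvalue (that would give $\lambda_{1}^{2}=q/(n-2)<0$); thus $M$ has three distinct principal curvatures everywhere on $\Sigma_{t}$, and $\lambda_{n}+(p_{1}-1)\lambda_{1}+(p_{2}-1)\lambda_{2}=0$ holds throughout with $\lambda_{n}=0$. If $p_{1}=1$ this forces $p_{2}=1$ and $p_{1}+p_{2}=2\neq n-1$, so $p_{1}\geq2$, and symmetrically $p_{2}\geq2$; hence $n-1=p_{1}+p_{2}\geq4$, i.e. $n>4$. Since the three principal curvatures stay mutually distinct on $\Sigma_{t}$, the corresponding eigenprojections vary continuously, so their multiplicities are locally constant, hence constant on the connected set $\Sigma_{t}$; then $\lambda_{1}=-\tfrac{p_{2}-1}{p_{1}-1}\lambda_{2}$ together with $\lambda_{1}\lambda_{2}=q$ determines $\lambda_{2}^{2}$, which is therefore constant, so $\lambda_{2}$ and $\lambda_{1}$ are constant on $\Sigma_{t}$; this is item $(2)$.

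The main obstacle is the bookkeeping with the integer multiplicities: one must argue that $p_{1}$ and $p_{2}$ are locally constant — which needs the three principal curvatures to remain mutually distinct so that the eigenprojections vary continuously — combine the two quadratics $(\ref{eq1})$–$(\ref{eq2})$ with the trace identity without circularity, and keep straight, in the case $\lambda_{n}\equiv0$, that the sign of $q$ is opposite to that of $b$, which is precisely what separates items $(1)$ and $(2)$.
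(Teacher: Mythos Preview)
Your argument is correct and follows essentially the same line as the paper's proof: you use Proposition~\ref{impTeigen} to get $\lambda_{n}$, $|T|^{2}$, $a$, $b$ constant on $\Sigma_{t}$, extract $nH$ from $(\ref{eq2})$ when $\lambda_{n}\neq0$, and in the case $\lambda_{n}=0$ combine the Vieta relations for $(\ref{eq1})$ with the trace identity to obtain $(p_{1}-1)\lambda_{1}+(p_{2}-1)\lambda_{2}=0$, then split on the sign of $b$. The only cosmetic differences are your use of the shorthand $q,q'$ and your direct appeal to continuity of the eigenprojections for the constancy of $p_{1},p_{2}$, where the paper cites Ryan's Proposition~2.2; these are the same argument.
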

\begin{proof}
We start by observing that the functions $a$ and $b$ are constant on $\Sigma_{t}$ and since $T$ is an eigenvector of $A$, it follows from Proposition \ref{impTeigen} that $|T|^2$, $\theta$ and $\lambda_{n}$ are constant on $\Sigma_{t}$.

Suppose that $\lambda_{n}(x_{0})\neq0$ for some $x_{0}\in\Sigma_{t}$. Then we have $\lambda_{n}(x)=\lambda_{n}(x_{0})\neq0,\ \forall x\in \Sigma_{t}$. If $X\in\mathfrak{X}(\Sigma_{t})$, then $(\ref{eq2})$ gives $X(H)\lambda_{n}=0$ and then, $H$ is constant on $\Sigma_{t}$ which, together with $(\ref{eq1})$, implies that the remaining principal curvatures of $A$ are also constant on $\Sigma_{t}$.

Now assume that $\lambda_{n}(x_{0})=0$ for some $x_{0}\in\Sigma_{t}$. As $\lambda_{n}$ is constant on $\Sigma_{t}$, we have $\lambda_{n}(x)=0,\ \forall x\in \Sigma_{t}$. Consequently, for each $x\in \Sigma_{t}$ we have
	\begin{align}\label{interm1}
	nH=p_{1}\lambda_{1}+p_{2}\lambda_{2},
	\end{align}
	with $p_{1},p_{2}\in\{0,\ldots,n-1\}$ and $p_{1}+p_{2}=n-1$. Using $(\ref{eq2})$ we also have $\rho=-(n-1)(a+|T|^2 b),$	which by $(\ref{eq1})$ implies that $\lambda_{1}$ and $\lambda_{2}$ are solution of the equation for $y$
	\begin{align}\label{eq1'}
	y^2-nHy-(n-2)|T|^2 b=0.
	\end{align}
	Observe that the equation above implies that $(\lambda_{1}-\lambda_{2})(\lambda_{1}+\lambda_{2}-nH)=0$.
	
	In what follows we proceed as in Theorem 3.1 of  \cite{Ryan} (pages $374$ and $375$).  We consider two situations according to the sign of $b(x_{0})$.
	
	Assume that $b(x_{0})<0$. In this case $b(x)<0$, $\forall x\in\Sigma_{t}$. Assume that $\lambda_{1}\neq\lambda_{2}$ at some point $x\in \Sigma_{t}$, that is $p_{1}\geq1$ and $p_{2}\geq1$. Since $b(x)<0$, $\lambda_{1}$ and $\lambda_{2}$ have the same sign. As a consequence of identity $(\ref{interm1})$ and equation $(\ref{eq1'})$, we have $\lambda_{1}+\lambda_{2}=nH=p_{1}\lambda_{1}+p_{2}\lambda_{2}$, and then $(p_{1}-1)\lambda_{1}+(p_{2}-1)\lambda_{2}=0$,	which implies $p_{1}=1$ and $p_{2}=1$. In this case $n-1=p_{1}+p_{2}=2$, what is a contradiction. Hence, the multiplicity of $\lambda_{1}$ on $\Sigma_{t}$ is $n-1$. Using equation $(\ref{eq1'})$ we obtain $\lambda_{1}^2=-|T|^2 b$. This shows that $\lambda_{1}$ is constant on $\Sigma_{t}$.
	
	Assume that $b(x_{0})>0$. In this case $b(x)>0$, $\forall x\in \Sigma_{t}$. Assume that the multiplicity of $\lambda_{1}$ is $n-1$ at some point $x\in \Sigma_{t}$. Proceeding as before we conclude $0\leq\lambda_{1}^2=-|T|^2b<0$, what is a contradiction. Then $\lambda_{1}\neq\lambda_{2}$ for all $x\in \Sigma_{t}$. In this case equation $(\ref{eq1'})$ implies that
	\begin{align}\label{intsyst}
	\begin{split}
	&\lambda_{1}\lambda_{2}=-(n-2)|T|^2b,\\
	&(p_{1}-1)\lambda_{1}+(p_{2}-1)\lambda_{2}=0,
	\end{split}
	\end{align}
	and Proposition 2.2 of \cite{Ryan} allows us to conclude that the multiplicities of $\lambda_{1}$ and $\lambda_{2}$ are constant on $\Sigma_{t}$, since they are the eigenvalues of the restriction of $A$ to $\Sigma_{t}$. Assume that $p_{1}=1$. Then $p_{2}=n-2$ and, using $(\ref{intsyst})$, we get $\lambda_{2}=0$ and $(n-2)|T|^2 b=0$, which is a contradiction. The same argument applies if $p_{2}=1$. Consequently $p_{1}\geq2$ and $p_{2}\geq2$, what gives $n\geq5$. Solving $(\ref{intsyst})$ we get
	\begin{align*}
			\lambda_{i}^2&=\frac{(p_{j}-1)(n-2)}{p_{i}-1}|T|^2b,
	\end{align*}
	with $i\in\{1,2\}$ and $j=3-i$. This shows that $\lambda_{i}$ is constant on $\Sigma_{t}$.
\end{proof}

\subsection{Local Intrinsic Structure of $M^{n}$}

In this subsection we prove that an Einstein hypersurface of $I\times_{f}\mathbb{Q}^n(c)$ is locally isometric to a multiply warped product whose number of fibers is either $1$ or $2$, according to $M$ having either $2$ or $3$ principal curvatures, respectively.

Assume that $U\subset M^{n}$ is an open set for which $T$ and $b$ do not vanish, and that $A$ has three distinct eigenvalues $\lambda_{n}$, $\lambda_{1}$ and $\lambda_{2}$.

In what follows we consider the distributions $D_{i}$, $i\in\{1,2,n\}$, which assign to each $x\in U$ the vector spaces
\begin{align*}
D_{i}(x)&=\{v\in T_{x}M;(A(x)-\lambda_{i}(x)I)v=0\}.
\end{align*}
Note that the distributions defined above are mutually orthogonal. Note also that $D_{n}$ is involutive, since it is smooth and has constant rank 1. The involutivity of $D_{i}$, $i\in\{1,2\}$, is proved below.

\begin{lemma}\label{integrability}
	Assume that in the open set $U$ the shape operator $A$ has three distinct eigenvalues $\lambda_{n}$, $\lambda_{1}$ and $\lambda_{2}$ and that $T$ and $b$ do not vanish. The distributions $D_{i}$, $i\in\{1,2,n\}$, are smooth, involutive and have rank $p_{1}$, $p_{2}$ and $1$, respectively. Consequently, if $x_{0}\in U$ and $J$, $N_{1}^{p_{1}}$ and $N_{2}^{p_{2}}$ are the integral manifolds of $D_{n}$, $D_{1}$ and $D_{2}$ through $x_{0}$, respectively, then, shrinking $U$, if necessary, it is diffeomorphic to $J\times N_{1}^{p_{1}}\times N_{2}^{p_{2}}$.
\end{lemma}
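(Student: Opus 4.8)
The plan is to establish involutivity of $D_1$ and $D_2$ by a standard computation with the Codazzi equation \ref{Codeq}, exploiting the fact that each $D_i$ is an eigendistribution of a Codazzi-type tensor. Since $D_n$ is already known to be involutive (smooth, rank $1$), and since smoothness of $\lambda_1,\lambda_2$ together with constancy of their multiplicities $p_1,p_2$ has already been recorded (so the distributions are smooth of the stated ranks), the only substantive task is involutivity of $D_1$ and $D_2$. Once all three distributions are smooth, mutually orthogonal, involutive, and $TM = D_n \oplus D_1 \oplus D_2$ pointwise, the final diffeomorphism claim follows from the Frobenius theorem applied to each factor: through $x_0$ one gets integral manifolds $J$, $N_1^{p_1}$, $N_2^{p_2}$, and the map $(t,y_1,y_2)\mapsto$ the point reached by flowing along the leaves is a local diffeomorphism onto a neighborhood of $x_0$, which we take (shrinking $U$) to be $U$ itself.

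For the involutivity of $D_i$, $i\in\{1,2\}$, I would argue as follows. Fix $i$, let $\lambda=\lambda_i$ be the relevant principal curvature and let $X,Y$ be local sections of $D_i$; I want to show $[X,Y]\in D_i$, i.e. $(A-\lambda I)[X,Y]=0$. Pair $(A-\lambda I)[X,Y]$ against an arbitrary principal vector $e$ with $Ae=\mu e$, $\mu\neq\lambda$, from one of the other two eigenspaces. Using $AX=\lambda X$, $AY=\lambda Y$, one expands $\langle (\nabla_X A)Y - (\nabla_Y A)X, e\rangle$ in terms of $X(\lambda)$, $Y(\lambda)$, the connection coefficients $\langle\nabla_X Y,e\rangle$, $\langle\nabla_Y X,e\rangle$ and the factor $(\mu-\lambda)$; the Codazzi equation \ref{Codeq} says this expression equals $\theta b(\langle T,X\rangle\langle e,Y\rangle - \langle T,Y\rangle\langle e,X\rangle)$. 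Since $X,Y\in D_i$ and (by Proposition \ref{Tdiag}) $T\in D_n$ with $D_n\perp D_i$, we have $\langle T,X\rangle=\langle T,Y\rangle=0$, so the right-hand side vanishes. Also $\langle e,X\rangle=\langle e,Y\rangle=0$ because $e$ lies in a different eigenspace, which kills the $X(\lambda)$, $Y(\lambda)$ terms. What survives is $(\mu-\lambda)\langle\nabla_X Y - \nabla_Y X, e\rangle = 0$, i.e. $(\mu-\lambda)\langle [X,Y], e\rangle = 0$, hence $\langle [X,Y], e\rangle = 0$. As $e$ ranges over $D_1\oplus D_2\oplus D_n$ minus $D_i$ (the parts with $\mu\neq\lambda$), this forces $[X,Y]\in D_i$.

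The one point requiring care — and the main obstacle — is that when $\dim D_i\geq 2$ one must also check that $[X,Y]$ has no component \emph{along the other eigenspace with multiplicity} $\geq 1$ that could be masked; but the pairing argument above already handles every $e$ in an eigenspace $D_\mu$ with $\mu\neq\lambda$, so this is automatic. A genuinely delicate variant arises if one worries about the smoothness/constancy of multiplicities breaking down on a subset of $U$, but that has been excluded by hypothesis (we are on the open set where $A$ has exactly three distinct eigenvalues with the multiplicities already shown constant). I would therefore present the Codazzi computation cleanly, note that the two obstruction terms vanish because $T\perp D_i$ and eigenspaces are orthogonal, conclude involutivity, and then invoke Frobenius three times plus the orthogonal splitting $TM=D_n\oplus D_1\oplus D_2$ to obtain the asserted local product structure $U\cong J\times N_1^{p_1}\times N_2^{p_2}$.
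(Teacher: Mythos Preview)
Your proposal is correct and follows essentially the same route as the paper: smoothness of the eigendistributions, a Codazzi computation for involutivity of $D_1$ and $D_2$, and then Frobenius for the local product decomposition. One small but genuine difference is worth recording. The paper computes $(A-\lambda_i I)[X,Y]$ directly and obtains $X(\lambda_i)Y - Y(\lambda_i)X$; to kill this it invokes Proposition~\ref{lambdannot0}, which says the principal curvatures are constant on the level sets of $\pi$, so $X(\lambda_i)=Y(\lambda_i)=0$ for $X,Y\in D_i$. Your pairing argument against an eigenvector $e$ with eigenvalue $\mu\neq\lambda_i$ disposes of those terms automatically via $\langle Y,e\rangle=\langle X,e\rangle=0$, so your involutivity step does not rely on Proposition~\ref{lambdannot0} and is in that sense a bit more self-contained. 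Conversely, the paper spells out smoothness of the distributions explicitly by producing local frames via the operators $(A-\lambda_j I)\circ(A-\lambda_k I)$ and the identity $(A-\lambda_1 I)(A-\lambda_2 I)(A-\lambda_n I)=0$, whereas you take smoothness as already established from the smoothness of $\lambda_1,\lambda_2$ and constancy of multiplicities; both are fine.
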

\begin{proof}
	First, recall that it follows from Proposition \ref{impTeigen} and Proposition \ref{Tdiag} that $\lambda_{n}$ is smooth and has multiplicity $1$. Arguing in a similar way to {\cite[Proposition 2.2]{Ryan}, it follows that $\lambda_{1}$ and $\lambda_{2}$ have constant multiplicities on $U$, say, $p_{1}$ and $p_{2}$, respectively, and are smooth functions in this set.}
	
	Since for each $x\in U$ the distributions $D_{1}$, $\ D_{2}$ and $D_{n}$ are the eigenspaces of $A$ associated to $\lambda_{1}$, $\lambda_{2}$ and $\lambda_{n}$, respectively, they have constant ranks $p_{1}$, $p_{2}$ and $1$, respectively, in $U$. These distributions are smooth, once $\lambda_{1}$, $\lambda_{2}$ and $\lambda_{n}$ are. To see this, let $x\in U$ and consider differentiable vector fields $E_{1},\ldots,E_{n}$ around $x$, linearly independent at $x$, so that $E_{1}(x)\in D_{n}(x)$, $E_{2}(x),\ldots,E_{p_{1}+1}(x)\in D_{1}(x)$ and $E_{p_{1}+2}(x),\ldots,E_{n}(x)\in D_{2}(x)$. Define the smooth vector fields 
	\begin{align*}
	\begin{split}
	&F_{1}=(A-\lambda_{1} I)\circ(A-\lambda_{2} I)E_{1},\\
	&F_{i}=(A-\lambda_{2} I)\circ(A-\lambda_{n} I)E_{i},\ i\in\{2,\ldots,p_{1}+1\},\\
	&F_{i}=(A-\lambda_{1} I)\circ(A-\lambda_{n} I)E_{i},\ i\in\{p_{1}+2,\ldots,n\}.
	\end{split}
	\end{align*}
	We claim that $\{F_{1}\}$, $\{F_{2},\ldots,F_{p_{1}+1}\}$ and $\{F_{p_{1}+2},\ldots,F_{n}\}$ generate $D_{n}$, $D_{1}$ and $D_{2}$, respectively, around $x$, from what follows that the distributions are smooth.	The claim follows from $(A-\lambda_{i}I)\circ(A-\lambda_{j} I)(TM)\subset D_{k}$, where $i,j,k$ are distinct indices in $\{1,2,n\}$. These inclusions, in turn, follow from
	\begin{equation}\label{inclus}
	(A-\lambda_{1} I)\circ(A-\lambda_{2} I)\circ(A-\lambda_{n} I)=0,
	\end{equation}
	which reflects the fact that $\lambda_{n},\ \lambda_{1}$ and $\lambda_{2}$ are the only eigenvalues of $A$.
	
	Now, to see that $D_{i}$, $i\in\{1,2\}$, is involutive, consider $X,Y\in D_{i}$. It follows from \ref{Codeq} that $(\nabla_{X}A)Y=(\nabla_{Y}A)X$. Consequently,
	\begin{align*}
	(A-\lambda_{i}I)[X,Y]=&A(\nabla_{X}Y)-A(\nabla_{Y}X)-\lambda_{i}\nabla_{X}Y+\lambda_{i}\nabla_{Y}X\\
	=&\nabla_{X}(AY)-(\nabla_{X}A)Y-\nabla_{Y}(AX)+(\nabla_{Y}A)X\\
	&-\lambda_{i}\nabla_{X}Y+\lambda_{i}\nabla_{Y}X\\
	=&\lambda_{i}\nabla_{X}Y+X(\lambda_{i})Y-\lambda_{i}\nabla_{Y}X-Y(\lambda_{i})X\\
	&-\lambda_{i}\nabla_{X}Y+\lambda_{i}\nabla_{Y}X\\
	=&0,
	\end{align*}
	where we have used Proposition \ref{lambdannot0} to assure that $X(\lambda_{i})=Y(\lambda_{i})=0$. This implies that $[X,Y]\in D_{i}$, and then $D_{i}$ is involutive.
	
	The lemma follows from a well known application of Frobenius Theorem. See \cite{kobnom} for further details.
\end{proof}

Lemma \ref{integrability} gives a topological decomposition of $M$ on $U$. Now we seek for a compatible decomposition for the metric $g$ on $U$. The first step in this task is given in the following lemma.

\begin{lemma}\label{propos2}
	Let $X_{i}\in D_{i}$, $i\in\{1,2\}$, and $T\in D_{n}$. Then,
	\begin{enumerate}
		\item \label{item1} $\displaystyle\nabla_{T}X_{i}=[T,X_{i}]+\left(\frac{f'}{f}+\theta\lambda_{i}\right)X_{i}$;
		\item \label{item2} $\nabla_{X_{j}}X_{i}\in D_{i}$, with $i,j\in\{1,2\}$ and $i\neq j$.
	\end{enumerate}
\end{lemma}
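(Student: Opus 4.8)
The plan is to prove both identities by directly unwinding the structure condition \ref{prehess} and the compatibility of $\nabla$ with the metric, using that $T\in D_n$, $X_i\in D_i$ and the distributions $D_1,D_2,D_n$ are mutually orthogonal.

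For item \ref{item1}, I would start from the torsion-free identity $\nabla_T X_i = [T,X_i] + \nabla_{X_i} T$ and then apply \ref{prehess} with $X=X_i$: since $\langle X_i, T\rangle = 0$ (orthogonality of $D_i$ and $D_n$) and $AX_i = \lambda_i X_i$, this gives
\begin{equation*}
\nabla_{X_i} T = \frac{f'}{f}\bigl(X_i - \langle X_i, T\rangle T\bigr) + \theta A X_i = \frac{f'}{f} X_i + \theta \lambda_i X_i = \left(\frac{f'}{f} + \theta\lambda_i\right) X_i,
\end{equation*}
which is exactly the claimed formula. This step is essentially immediate once one notes the orthogonality and uses the eigenvalue equation.

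For item \ref{item2}, the goal is to show $\langle \nabla_{X_j} X_i, Z\rangle = 0$ for every $Z\in D_k$ with $k\neq i$, i.e. for $Z\in D_j$ and $Z\in D_n$. The idea is to pair the Codazzi equation \ref{Codeq} with suitable vector fields: applying $\langle (\nabla_{X_j} A) X_i - (\nabla_{X_i} A) X_j, Z\rangle$ and expanding $(\nabla_X A)Y = \nabla_X(AY) - A(\nabla_X Y)$ produces terms of the form $(\lambda_i - \lambda_k)\langle \nabla_{X_j} X_i, Z\rangle$ plus gradient terms $X_j(\lambda_i)$, $X_i(\lambda_j)$, which vanish on $\Sigma_t$-tangent directions by Proposition \ref{lambdannot0} (or are killed by orthogonality). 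When $Z\in D_n$ I also need to handle the $\langle T, X\rangle$ terms on the right-hand side of \ref{Codeq}, but those vanish because $X_i,X_j\perp T$. Solving for $\langle \nabla_{X_j} X_i, Z\rangle$ and using $\lambda_i \neq \lambda_k$ yields the claim; one must also separately treat the $D_j$-component, where symmetrizing in $X_i \leftrightarrow X_j$ and using self-adjointness of $\nabla A$ gives the vanishing. The main obstacle is organizing these Codazzi contractions cleanly so that the correct combination isolates $\langle \nabla_{X_j} X_i, Z\rangle$ with a nonzero coefficient; once the bookkeeping of eigenvalue differences and the (vanishing) derivative terms is set up, each case closes by a short computation.
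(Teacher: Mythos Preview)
Your argument for item~(\ref{item1}) is correct and identical to the paper's: the identity is an immediate consequence of \ref{prehess} once you use $\langle X_i,T\rangle=0$ and $AX_i=\lambda_iX_i$.

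For item~(\ref{item2}) your strategy is essentially the same as the paper's, only phrased componentwise rather than via the operator identity $(A-\lambda_2 I)(\nabla_{X_1}X_2)=(A-\lambda_1 I)(\nabla_{X_2}X_1)$. There is, however, one imprecision worth flagging. For $Z=Y_j\in D_j$ the Codazzi contraction does exactly what you want: expanding $\langle(\nabla_{X_j}A)X_i-(\nabla_{X_i}A)X_j,Y_j\rangle=0$ and using $X_j(\lambda_i)=X_i(\lambda_j)=0$ from Proposition~\ref{lambdannot0} yields
\[
(\lambda_i-\lambda_j)\langle\nabla_{X_j}X_i,Y_j\rangle=0,
\]
since the second term collapses (both $X_j$ and $Y_j$ lie in the $\lambda_j$-eigenspace). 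No symmetrization is needed. But for $Z=T\in D_n$ the same computation only gives
\[
(\lambda_i-\lambda_n)\langle\nabla_{X_j}X_i,T\rangle=(\lambda_j-\lambda_n)\langle\nabla_{X_i}X_j,T\rangle,
\]
one equation in two unknowns, so Codazzi alone does not isolate the term. The fix is immediate and is precisely what the paper does (and what you already have from item~(\ref{item1})): use \ref{prehess} directly to get
\[
\langle\nabla_{X_j}X_i,T\rangle=-\langle X_i,\nabla_{X_j}T\rangle=-\Bigl(\tfrac{f'}{f}+\theta\lambda_j\Bigr)\langle X_i,X_j\rangle=0.
\]
With this adjustment your proof is complete and coincides with the paper's argument.
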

\begin{proof}
	Item $(\ref{item1})$ follows immediately from \ref{prehess}. To prove item $(\ref{item2})$, observe that from \ref{Codeq}, $(\nabla_{X_{i}}A)X_{j}=(\nabla_{X_{j}}A)X_{i}$, and from Proposition \ref{lambdannot0}, $\nabla_{X_{2}}(\lambda_{1}X_{1})=\lambda_{1}\nabla_{X_{2}}X_{1}$ and $\nabla_{X_{1}}(\lambda_{2}X_{2})=\lambda_{2}\nabla_{X_{1}}X_{2}$. Using these identities we have
	\begin{align*}
	(A-\lambda_{2}I)(\nabla_{X_{1}}X_{2})=&\nabla_{X_{1}}(AX_{2})-(\nabla_{X_{1}}A)X_{2}-\lambda_{2}\nabla_{X_{1}}X_{2}\\
	=&\nabla_{X_{1}}(\lambda_{2}X_{2})-(\nabla_{X_{1}}A)X_{2}-\lambda_{2}\nabla_{X_{1}}X_{2}\\
	=&\nabla_{X_{2}}(\lambda_{1}X_{1})-(\nabla_{X_{2}}A)X_{1}-\lambda_{1}\nabla_{X_{2}}X_{1}\\
	=&\nabla_{X_{2}}(AX_{1})-(\nabla_{X_{2}}A)X_{1}-\lambda_{1}\nabla_{X_{2}}X_{1}\\
	=&(A-\lambda_{1}I)(\nabla_{X_{2}}X_{1}).
	\end{align*}
	Observe that from $(\ref{inclus})$, we get $(A-\lambda_{i}I)(TM)\subset D_{j}\oplus D_{k}$, $i,j,k$ all distinct in $\{1,2,n\}$. This implies that $(A-\lambda_{1}I)(\nabla_{X_{2}}X_{1})=(A-\lambda_{2}I)(\nabla_{X_{1}}X_{2})\in D_{n}$. But since,
	\begin{align*}
	\left\langle (A-\lambda_{1}I)\nabla_{X_{2}}X_{1},T\right\rangle&=\left\langle\nabla_{X_{2}}X_{1},AT\right\rangle-\lambda_{1}\left\langle\nabla_{X_{2}}X_{1},T\right\rangle\\
	&=(\lambda_{n}-\lambda_{1})\left\langle\nabla_{X_{2}}X_{1},T\right\rangle\\
	&=-(\lambda_{n}-\lambda_{1})\left\langle X_{1},\nabla_{X_{2}}T\right\rangle\\
	&=-(\lambda_{n}-\lambda_{1})\left(\frac{f'}{f}\left\langle X_{1},X_{2}\right\rangle+\theta \left\langle X_{1},AX_{2}\right\rangle\right)\\
	&=-(\lambda_{n}-\lambda_{1})\theta\lambda_{2}\left\langle X_{1},X_{2}\right\rangle\\
	&=0,
	\end{align*}
	we have $(A-\lambda_{1}I)(\nabla_{X_{2}}X_{1})=(A-\lambda_{2}I)(\nabla_{X_{1}}X_{2})=0$, and then $\nabla_{X_{2}}X_{1}\in D_{1}$ and $\nabla_{X_{1}}X_{2}\in D_{2}$, finishing the proof.
\end{proof}

If $J$ is an integral curve of $T$ with arc length parameter $s$, one has $T=|T|\partial_{s}$ and, from Propositions \ref{impTeigen} and \ref{lambdannot0}, $\lambda_{1},\ \lambda_{2},\ |T|$ and $\theta$ are functions only of $s$ in $U$. On the other hand, the restrictions of $f$ and $f'$ to $M$ obey the relation $\frac{df}{ds}=|T|f'$. With this in mind, the decomposition of $g$ in $U$ is given in the next result.
\begin{proposition}\label{prop1}
	There are Riemannian metrics $g_{1}$ and $g_{2}$ on $N_{1}$ and $N_{2}$, the manifolds of Lemma \ref{integrability}, respectively, so that the open set $U$ with the Riemannian metric induced by $M$ is locally isometric to $J\times N_{1}^{p_{1}}\times N_{2}^{p_{2}}$ with the metric
	\begin{align}\label{doubwarp}
		g=ds^2+\varphi_{1}(s)^2g_{1}+\varphi_{2}(s)^2g_{2},
	\end{align}
	where
	\begin{align}\label{labelfi}
	\frac{1}{\varphi_{i}}\frac{d\varphi_{i}}{ds}=\frac{1}{|T|^2f}\frac{df}{ds}+\frac{\theta\lambda_{i}}{|T|},\ i\in\{1,2\}.
	\end{align}
	The manifold $J\times N_{1}^{p_{1}}\times N_{2}^{p_{2}}$ regarded with the metric $(\ref{doubwarp})$ is called multiply warped product and it is denoted by $J\times_{\varphi_{1}}N_{1}\times_{\varphi_{2}}N_{2}$.
\end{proposition}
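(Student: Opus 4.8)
The plan is to build a local chart on $U$ adapted to the three distributions and then read off the metric coefficients. The key preliminary step is to show that $\nabla_{T}X_{i}\in D_{i}$ for every $X_{i}\in D_{i}$, $i\in\{1,2\}$ --- a strengthening of Lemma \ref{propos2}. To prove it I would apply the Codazzi equation \ref{Codeq} to the pair $(X_{i},T)$: using $AT=\lambda_{n}T$, $AX_{i}=\lambda_{i}X_{i}$, $X_{i}(\lambda_{n})=0$ (Proposition \ref{impTeigen}) and \ref{prehess}, one arrives at $(A-\lambda_{i}I)(\nabla_{T}X_{i})=\nu X_{i}$ for some function $\nu$. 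Writing $\nabla_{T}X_{i}=v_{i}+v_{j}+v_{n}$ with $v_{k}\in D_{k}$ and $j=3-i$, this becomes $(\lambda_{j}-\lambda_{i})v_{j}+(\lambda_{n}-\lambda_{i})v_{n}=\nu X_{i}$; the left side lies in $D_{j}\oplus D_{n}$ and the right side in $D_{i}$, so both vanish, and $\lambda_{i}\neq\lambda_{j},\lambda_{n}$ forces $v_{j}=v_{n}=0$, i.e. $\nabla_{T}X_{i}\in D_{i}$. Since in addition $\nabla_{X_{i}}T=(\tfrac{f'}{f}+\theta\lambda_{i})X_{i}\in D_{i}$ by \ref{prehess}, and $X_{i}(|T|)=0$ ($X_{i}$ being tangent to the level of $\pi$ through its footpoint, on which $|T|$ is constant by Proposition \ref{impTeigen}), one also obtains $[T/|T|,X_{i}]\in D_{i}$.

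Next I would construct the chart. The level $\Sigma$ of $\pi$ through $x_{0}$ is the leaf of $T^{\perp}=D_{1}\oplus D_{2}$, and inside $\Sigma$ the involutive distributions $D_{1},D_{2}$ are complementary (Lemma \ref{integrability}), so after shrinking $U$ one has $\Sigma\cong N_{1}^{p_{1}}\times N_{2}^{p_{2}}$. Let $\Phi$ be the local flow of $\xi:=T/|T|$: its orbits are geodesics (Proposition \ref{impTeigen}) and, $\xi$ spanning $D_{n}\perp D_{1}\oplus D_{2}$, it is transverse to $\Sigma$; hence $\Psi(s,q):=\Phi_{s}(q)$ is a diffeomorphism of $J\times N_{1}\times N_{2}$ onto a neighbourhood of $x_{0}$ with $\partial_{s}=\xi$. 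By the first paragraph $[\xi,X_{i}]\in D_{i}$, so $\Phi_{s}$ preserves the foliations by $D_{1}$ and by $D_{2}$; consequently, in the induced coordinates $(s,y^{1},\dots ,y^{p_{1}},z^{1},\dots ,z^{p_{2}})$, one has $\partial_{y^{a}}\in D_{1}$, $\partial_{z^{\alpha}}\in D_{2}$ and $\partial_{s}=\xi\in D_{n}$. As these three distributions are mutually orthogonal, $g=ds^{2}+g_{ab}(s,y,z)\,dy^{a}dy^{b}+g_{\alpha\beta}(s,y,z)\,dz^{\alpha}dz^{\beta}$.

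It then remains to identify the two blocks. For $z$-independence of the first: since $\partial_{y^{a}},\partial_{z^{\alpha}}$ commute and $\nabla$ is torsion free, $\nabla_{\partial_{z^{\alpha}}}\partial_{y^{a}}=\nabla_{\partial_{y^{a}}}\partial_{z^{\alpha}}$, which by Lemma \ref{propos2}$(\ref{item2})$ lies in $D_{1}$ as well as in $D_{2}$, hence vanishes; thus $\partial_{z^{\alpha}}g_{ab}=0$, and symmetrically $\partial_{y^{a}}g_{\alpha\beta}=0$. For the $s$-dependence: $\nabla_{\partial_{s}}\partial_{y^{a}}=\nabla_{\partial_{y^{a}}}\xi=|T|^{-1}\nabla_{\partial_{y^{a}}}T$ (using $\partial_{y^{a}}(|T|)=0$), and \ref{prehess} with $X=\partial_{y^{a}}\in D_{1}$, $\langle\partial_{y^{a}},T\rangle=0$, $A\partial_{y^{a}}=\lambda_{1}\partial_{y^{a}}$ gives $\nabla_{\partial_{y^{a}}}T=(\tfrac{f'}{f}+\theta\lambda_{1})\partial_{y^{a}}$, so that $\partial_{s}g_{ab}=2\kappa_{1}g_{ab}$ with $\kappa_{i}:=|T|^{-1}(\tfrac{f'}{f}+\theta\lambda_{i})$; here $\kappa_{i}$ is a function of $s$ alone, since $|T|,\theta,\lambda_{i}$ are (Propositions \ref{impTeigen} and \ref{lambdannot0}) and so is $\pi$ --- hence $f,f'$ --- because $T=\nabla\pi\perp D_{1}\oplus D_{2}$. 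Likewise $\partial_{s}g_{\alpha\beta}=2\kappa_{2}g_{\alpha\beta}$. Fixing $s_{0}$, integrating these linear ODEs in $s$, and taking $\varphi_{i}(s):=\exp\big(\int_{s_{0}}^{s}\kappa_{i}\big)$ and $g_{1}:=g_{ab}(s_{0},\cdot)\,dy^{a}dy^{b}$, $g_{2}:=g_{\alpha\beta}(s_{0},\cdot)\,dz^{\alpha}dz^{\beta}$ (Riemannian metrics on $N_{1},N_{2}$, as $g$ is), one obtains $g=ds^{2}+\varphi_{1}(s)^{2}g_{1}+\varphi_{2}(s)^{2}g_{2}$; and $\tfrac{1}{\varphi_{i}}\tfrac{d\varphi_{i}}{ds}=\kappa_{i}=\tfrac{f'}{|T|f}+\tfrac{\theta\lambda_{i}}{|T|}$ is exactly $(\ref{labelfi})$ once $\tfrac{df}{ds}=|T|f'$ is used.

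I expect the main obstacle to be the very first step, the refinement $\nabla_{T}X_{i}\in D_{i}$ extracted from the Codazzi equation: it is precisely what forces the flow of $T/|T|$ to respect the fibre distributions $D_{1}$ and $D_{2}$, hence what produces a chart adapted to all three distributions at once. Once that chart is in hand, the remaining steps are routine manipulations of the structure equations \ref{prehess} and \ref{Codeq} together with an elementary ODE.
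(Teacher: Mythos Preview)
Your proof is correct and follows essentially the same route as the paper: obtain a product chart $(s,y^{a},z^{\alpha})$ adapted to $D_{n}\oplus D_{1}\oplus D_{2}$, then use Lemma~\ref{propos2}(\ref{item2}) to kill the cross-derivatives of the metric blocks and \ref{prehess} (equivalently Lemma~\ref{propos2}(\ref{item1})) to get $\partial_{s}g_{ab}=2\kappa_{1}g_{ab}$, $\partial_{s}g_{\alpha\beta}=2\kappa_{2}g_{\alpha\beta}$, which integrates to the multiply warped form with $(\ref{labelfi})$. The only difference is that you construct the chart explicitly via the flow of $T/|T|$---and for this you supply the extra ingredient $\nabla_{T}X_{i}\in D_{i}$ (hence $[\xi,D_{i}]\subset D_{i}$) from Codazzi---whereas the paper simply takes the chart as given by Lemma~\ref{integrability}; your argument in fact makes precise the ``well known application of Frobenius'' invoked there.
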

\begin{proof}
	Let $x_{0}\in U$ and consider a coordinate system $(s,x_{1},\ldots,x_{p_{1}},y_{1},\ldots,y_{p_{2}})$ around $x_{0}$ for which $x_{0}=(0,\ldots,0)$, where  $s\in J$ is the arc length parameter, $(x_{1},\ldots,x_{p_{1}})\in N_{1}^{p_{1}}$ and $(y_{1},\ldots,y_{p_{2}})\in N_{2}^{p_{2}}$. Consequently, $\partial_{s}\in D_{n}$, $\partial_{x_{1}},\ldots,\partial_{x_{p_{1}}}\in D_{1}$ and $\partial_{y_{1}},\ldots,\partial_{y_{p_{2}}}\in D_{2}$. Let $g_{ij}$ be the representation of the metric of $M$ in these coordinates. If $i,j\in\{1,\ldots,p_{1}\}$ and $k,l\in\{1,\ldots,p_{2}\}$, then item $(\ref{item2})$ of Lemma \ref{propos2} gives $\nabla_{\partial_{x_{i}}}\partial_{y_{k}}=\nabla_{\partial_{y_{k}}}\partial_{x_{i}}\in D_{1}\cap D_{2}=\{0\}$, and then
	\begin{align*}
	\partial_{y_{k}}(g_{ij})=&\left\langle\nabla_{\partial_{y_{k}}}\partial_{x_{i}},\partial_{x_{j}}\right\rangle+\left\langle\partial_{x_{i}},\nabla_{\partial_{y_{k}}}\partial_{x_{j}}\right\rangle=0,\\
	\partial_{x_{i}}(g_{kl})=&\left\langle\nabla_{\partial_{x_{i}}}\partial_{y_{k}},\partial_{y_{l}}\right\rangle+\left\langle\partial_{y_{k}},\nabla_{\partial_{x_{i}}}\partial_{y_{l}}\right\rangle=0
	\end{align*}
	Thus, $g_{ij}$ restricted to $N_{1}^{p_{1}}$ does not depend on the coordinates $(y_{1},\ldots,y_{p_{2}})$, and $g_{kl}$ restricted to $N_{2}^{p_{2}}$ does not depend on the coordinates $(x_{1},\ldots,x_{p_{1}})$. On the other hand, using item $(\ref{item1})$ of Lemma \ref{propos2} and $[\partial_{x_{i}},\partial_{s}]=0$ we obtain
	\begin{align*}
	\partial_{s}(g_{ij})=&\left\langle\nabla_{\partial_{s}}\partial_{x_{i}},\partial_{x_{j}}\right\rangle+\left\langle\partial_{x_{i}},\nabla_{\partial_{s}}\partial_{x_{j}}\right\rangle\\
	=&\frac{2}{|T|}\left(\frac{f'}{f}+\theta\lambda_{1}\right)g_{ij},
	\end{align*}
	which gives
	\begin{align*}
	\partial_{s}(\ln|g_{ij}|)=2\left(\frac{1}{|T|^2f}\frac{df}{ds}+\frac{\theta\lambda_{1}}{|T|}\right).
	\end{align*}
	Analogously,
	\begin{align*}
	\partial_{s}(\ln|g_{kl}|)=2\left(\frac{1}{|T|^2f}\frac{df}{ds}+\frac{\theta\lambda_{2}}{|T|}\right).
	\end{align*}
	Note that $\partial_{s}(\ln|g_{ij}|)$ and $\partial_{s}(\ln|g_{kl}|)$ depend only on $s$. Integrating them from $0$ to $s\in J$ and using the expressions above we get
	\begin{align}\label{dcpst1}
	\begin{split}
	g_{ij}(s,x_{1},\ldots,x_{p_{1}},y_{1},\ldots,y_{p_{2}})&=\varphi_{1}(s)^2g_{ij}(0,x_{1},\ldots,x_{p_{1}},0,\ldots,0),\\
	g_{kl}(s,x_{1},\ldots,x_{p_{1}},y_{1},\ldots,y_{p_{2}})&=\varphi_{2}(s)^2g_{kl}(0,0,\ldots,0,y_{1},\ldots,y_{p_{2}}),
	\end{split}
	\end{align}
	where $\varphi_{i}$ is given by $(\ref{labelfi})$. Now, consider the Riemannian metrics
	\begin{align}\label{notation}
	\begin{split}
	(g_{1})_{ij}(x_{1},\ldots,x_{p_{1}})=g_{ij}(0,x_{1},\ldots,x_{p_{1}},0,\ldots,0)\\
	(g_{2})_{kl}(y_{1},\ldots,y_{p_{2}})=g_{kl}(0,0,\ldots,0,y_{1},\ldots,y_{p_{2}}),
	\end{split}
	\end{align}
	on $N_{1}^{p_{1}}$ and $N_{2}^{p_{2}}$, respectively. Using $(\ref{dcpst1})$, $(\ref{notation})$, $g(\partial_{s},\partial_{s})=1$, $g(\partial_{s},\partial_{y_{k}})=0$, $g(\partial_{s},\partial_{x_{i}})=0$ and $g(\partial_{x_{i}},\partial_{y_{k}})=0$, we obtain $(\ref{doubwarp})$, finishing the proof.
\end{proof}

If $U\subset M^{n}$ is an open set where $A$ has exactly two distinct eigenvalues at each $x\in U$, say, $\lambda_{n}$ and $\lambda$, applying similar arguments one has
\begin{proposition}\label{prop2}
	There is a Riemannian metric $g_{N}$ on $N^{n-1}$ and so that the open set $U$ with the Riemannian metric induced by $M$ is locally isometric to $J\times N^{n-1}$ with the metric	\begin{align*}
	g=ds^2+\varphi(s)^2g_{N},
	\end{align*}
	where
	\begin{align*}
		\frac{1}{\varphi}\frac{d\varphi}{ds}=\frac{1}{|T|^2f}\frac{df}{ds}+\frac{\theta\lambda}{|T|}.
	\end{align*}
\end{proposition}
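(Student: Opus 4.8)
The plan is to run the same machinery as in Lemma~\ref{integrability}, Lemma~\ref{propos2} and Proposition~\ref{prop1}, now with only two eigenspaces, so I will only indicate the modifications. First introduce the distributions $D_{n}=\ker(A-\lambda_{n}I)$ and $D=\ker(A-\lambda I)$ on $U$. By Proposition~\ref{Tdiag} the eigenvalue $\lambda_{n}$ has multiplicity $1$, so $D$ has constant rank $n-1$ and $\lambda=(nH-\lambda_{n})/(n-1)$ is smooth; hence both distributions are smooth, and $D_{n}$ is involutive because it has rank $1$. For $D$, note that since $T=\nabla\pi\neq0$ on $U$ the function $\pi$ is a submersion there, so $D=\ker d\pi$ is involutive with leaves the connected components of the level sets $\Sigma_{t}$ of $\pi$ (alternatively, one repeats the Codazzi computation of Lemma~\ref{integrability} verbatim, using $X(\lambda)=0$ for $X\in D$, which holds by Proposition~\ref{lambdannot0}). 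Frobenius' theorem then gives, after shrinking $U$, a diffeomorphism $U\cong J\times N^{n-1}$, where $J$ and $N^{n-1}$ are the integral manifolds through a fixed point $x_{0}$.

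Next record the analog of item~(\ref{item1}) of Lemma~\ref{propos2}: for $X\in D$, structure condition \ref{prehess} gives $\nabla_{X}T=\frac{f'}{f}X+\theta\lambda X$, since $\langle X,T\rangle=0$ and $AX=\lambda X$; hence $\nabla_{T}X=\nabla_{X}T+[T,X]=[T,X]+\left(\frac{f'}{f}+\theta\lambda\right)X$. No analog of item~(\ref{item2}) is needed because there is a single fiber. Now copy the computation from the proof of Proposition~\ref{prop1}: take a chart $(s,x_{1},\dots,x_{n-1})$ around $x_{0}$ with $s$ the arc-length parameter along the integral curve of $T$, so that $T=|T|\partial_{s}$, $\partial_{s}\in D_{n}$ and $\partial_{x_{i}}\in D$. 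By Propositions~\ref{impTeigen} and \ref{lambdannot0} the functions $|T|$, $\theta$ and $\lambda$ depend only on $s$ on $U$, and $[\partial_{s},\partial_{x_{i}}]=0$, so $[T,\partial_{x_{i}}]\in D_{n}$ and is $g$-orthogonal to $\partial_{x_{j}}$. Writing $g_{ij}$ for the metric components one gets $\partial_{s}(g_{ij})=\frac{2}{|T|}\left(\frac{f'}{f}+\theta\lambda\right)g_{ij}$, that is $\partial_{s}(\ln|g_{ij}|)=2\left(\frac{1}{|T|^{2}f}\frac{df}{ds}+\frac{\theta\lambda}{|T|}\right)$ after using $\frac{df}{ds}=|T|f'$. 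Integrating from $0$ to $s$ yields $g_{ij}(s,x)=\varphi(s)^{2}g_{ij}(0,x)$ with $\varphi$ as in the statement; setting $(g_{N})_{ij}(x)=g_{ij}(0,x)$ and using $g(\partial_{s},\partial_{s})=1$ and $g(\partial_{s},\partial_{x_{i}})=0$ gives the claimed warped-product form.

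The only genuinely delicate point is exactly the one already present in Proposition~\ref{prop1}: one must know that the coefficient $\frac{f'}{f}+\theta\lambda$ is a function of $s$ alone and that the bracket term contributes nothing. Both follow from $T$ being a principal direction (Proposition~\ref{Tdiag}), which forces $|T|^{2}$, $\theta$ and $\lambda_{n}$ — and hence, via \eqref{eq1}, also $\lambda$ — to be constant on the connected level sets of $\pi$ (Propositions~\ref{impTeigen} and \ref{lambdannot0}). Everything else is a routine transcription of the three-eigenvalue case with $p_{1}=n-1$ and $p_{2}=0$. \qed
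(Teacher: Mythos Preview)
Your proof is correct and follows precisely the approach the paper intends: it simply says ``applying similar arguments one has'' before stating Proposition~\ref{prop2}, and you have faithfully transcribed the three-eigenvalue machinery of Lemma~\ref{integrability}, Lemma~\ref{propos2} and Proposition~\ref{prop1} to the two-eigenvalue setting. The one minor variation is your observation that, with only two eigenvalues, $D=T^{\perp}=\ker d\pi$, so involutivity of $D$ is immediate without invoking Codazzi; this is a pleasant simplification unavailable in the three-eigenvalue case, but the overall route is the same.
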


In the next section we apply Proposition \ref{prop1} and Proposition \ref{prop2} to prove Theorem \ref{intM}, Theorem \ref{rigidity1} and Theorem \ref{thmlambda_{n}=0}.

\section{Applications}

In this section we apply the results obtained in the last section. An immediate consequence is the following one.
\begin{proof}[{\bf Proof of Theorem \ref{intM}}]
	Follows essentially from Proposition \ref{Tdiag} and Proposition \ref{prop1}.
\end{proof}

Next, we prove Theorem \ref{rigidity1}, Theorem \ref{thmlambda_{n}=0} and use our arguments to prove the main theorem of \cite{SantosPN}.

\begin{proof}[{\bf Proof of Theorem \ref{rigidity1}}]
By Proposition \ref{Tdiag} and Proposition \ref{prop2} it follows that $U$ can be taken isometric to $J\times_{\varphi}N^{n-1}$. Now consider orthogonal vector fields $X,Y\in\mathfrak{X}(N)$. Using the formulas for the sectional curvature of a multiply warped product (which may be found for example in \cite{wpmandlcfs}, formulas $(3)$, where their curvature tensor differs from ours by a sign) one gets
\begin{align*}
	&\left(sec_{N}(X,Y)-\left(\frac{d\varphi}{ds}\right)^2\right)\frac{1}{\varphi^2}=sec_{M}(X,Y)=\lambda^2-a,
\end{align*}
where in the right hand side we have used the Gauss formula or, more precisely, equation \ref{Gausseq}. Rewriting the equation above we get
\begin{align*}
	&\left(\frac{d\varphi}{ds}\right)^2+(\lambda^2-a)\varphi^2=sec_{N}(X,Y).
\end{align*}
Now we observe that from the equation above, there is a constant $k$ such that $sec_{N}(X,Y)=k$, for any orthonormal vector fields $X,Y\in\mathfrak{X}(N)$. As a consequence, the warped product $J\times_{\varphi}N^{n-1}$ is locally conformally flat \cite{wpmandlcfs,brozosvazquez,yau}, and since it is isometric to $U$, which is Einstein, $M$ has constant sectional curvature on $U$.
\end{proof}

When $M$ has three distinct principal curvatures on $U$, Proposition \ref{prop1} says that it is a multiply warped product with one dimensional base and two fibers. The curvature of such spaces can be expressed in terms of the warping functions (i.e., $\varphi_{1}$ and $\varphi_{2}$) and the curvatures of the fibers. Such expressions can be found, for example, in \cite{wpmandlcfs}. This and Gauss formula yield the following result.
\begin{proposition}\label{step0}
	Let $(M^{n},g)$, $n\geq4$, be an Einstein hypersurface of $I\times_{f}\mathbb{Q}^{n}(c)$ and suppose that $T$ and $b$ do not vanish on an open set $U\subset M$. If $M$ has exactly three distinct principal curvatures on $U$, say, $\lambda_{1},\ \lambda_{2}$ and $\lambda_{n}$, with multiplicities $p_{1},\ p_{2}$ and $1$, respectively, then $U$ is locally isometric to $J\times_{\varphi_{1}}N_{1}^{p_{1}}\times_{\varphi_{2}}N_{2}^{p_{2}}$, where $(N_{i}^{p_{i}},g_{N_{i}})$ is a manifold of constant sectional curvature provided $p_{i}>1$. Furthermore, the following equations are satisfied
\begin{align}
	&\frac{d^{2}\varphi_{i}}{ds^{2}}=(a+b|T|^2-\lambda_{i}\lambda_{n})\varphi_{i}\label{id1}\\
	&\left(\frac{d\varphi_{i}}{ds}\right)^2+(\lambda_{i}^2-a)\varphi_{i}^2=k_{i}\label{id2},\ \text{if}\  p_{i}>1\\
	&\frac{d\varphi_{1}}{ds}\frac{d\varphi_{2}}{ds}=(a-\lambda_{1}\lambda_{2})\varphi_{1}\varphi_{2}\label{id3}
\end{align}
where $i\in\{1,2\}$.
\end{proposition}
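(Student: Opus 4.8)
The starting point is Proposition \ref{prop1}: since $T$ and $b$ do not vanish on $U$ and $A$ has the three distinct eigenvalues $\lambda_{n},\lambda_{1},\lambda_{2}$ there, that proposition already supplies the local isometry between $U$ and a multiply warped product $J\times_{\varphi_{1}}N_{1}^{p_{1}}\times_{\varphi_{2}}N_{2}^{p_{2}}$ with the $\varphi_{i}$ obeying $(\ref{labelfi})$. What remains is to identify the fibers $N_{i}$ when $p_{i}>1$ and to extract the three differential equations, and the mechanism is the same for both tasks: compare the sectional curvatures of $M$ computed from the Gauss equation \ref{Gausseq} with those computed from the standard curvature formulas for a multiply warped product with one-dimensional base (as collected in \cite{wpmandlcfs}). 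Throughout one uses that, by Propositions \ref{impTeigen} and \ref{lambdannot0}, the functions $\lambda_{1},\lambda_{2},\lambda_{n},|T|^{2},\theta$ --- and hence $a$ and $b$ --- depend only on the arc length parameter $s$, and that $T=|T|\partial_{s}$ spans $D_{n}$.

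I would run through the three types of coordinate $2$-plane in turn. First, the pure-fiber planes: fix $i$ with $p_{i}>1$ and pick orthonormal $X_{i},X_{i}'\in D_{i}$. Both are principal with eigenvalue $\lambda_{i}$ and orthogonal to $T$, so the $b$-terms of \ref{Gausseq} drop out and, exactly as in the proof of Theorem \ref{rigidity1}, $sec_{M}(X_{i},X_{i}')=\lambda_{i}^{2}-a$. The warped product formula gives $sec_{M}(X_{i},X_{i}')=\varphi_{i}^{-2}\big(sec_{N_{i}}(X_{i},X_{i}')-(\varphi_{i}')^{2}\big)$, the base being one-dimensional so that $|\nabla\varphi_{i}|=|\varphi_{i}'|$. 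Equating the two expressions gives $sec_{N_{i}}=(\lambda_{i}^{2}-a)\varphi_{i}^{2}+(\varphi_{i}')^{2}$; the right-hand side depends only on $s$ while the left-hand side is $s$-independent and does not even depend on the chosen plane, so $sec_{N_{i}}\equiv k_{i}$ for a constant $k_{i}$. Hence $(N_{i}^{p_{i}},g_{N_{i}})$ has constant sectional curvature $k_{i}$ and $(\ref{id2})$ holds.

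Next, the mixed base-fiber planes, which yield $(\ref{id1})$ and require only $p_{i}\ge1$: for a unit $X_{i}\in D_{i}$ and the unit base field $\partial_{s}$, the warped product gives $sec_{M}(\partial_{s},X_{i})=-\varphi_{i}''/\varphi_{i}$, whereas \ref{Gausseq} --- now the $b$-terms survive through $\langle\partial_{s},T\rangle^{2}=|T|^{2}$, together with $A\partial_{s}=\lambda_{n}\partial_{s}$ and $AX_{i}=\lambda_{i}X_{i}$ --- gives $sec_{M}(\partial_{s},X_{i})=\lambda_{i}\lambda_{n}-a-b|T|^{2}$; equating yields $\varphi_{i}''=(a+b|T|^{2}-\lambda_{i}\lambda_{n})\varphi_{i}$. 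Finally, the fiber-fiber planes spanning both factors: for unit $X_{1}\in D_{1}$ and $X_{2}\in D_{2}$ (both orthogonal to $T$, so again no $b$-terms) the Gauss equation gives $sec_{M}(X_{1},X_{2})=\lambda_{1}\lambda_{2}-a$, while the warped product gives $sec_{M}(X_{1},X_{2})=-\varphi_{1}'\varphi_{2}'/(\varphi_{1}\varphi_{2})$; equating yields $\varphi_{1}'\varphi_{2}'=(a-\lambda_{1}\lambda_{2})\varphi_{1}\varphi_{2}$, i.e.\ $(\ref{id3})$.

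The individual computations are routine once the plane is chosen; the delicate point is the bookkeeping. One must reconcile the index and sign conventions of \ref{Gausseq} with those of the multiply warped product curvature formulas of \cite{wpmandlcfs} --- which, as already noted in the proof of Theorem \ref{rigidity1}, differ from ours by a sign --- reduce the gradient terms $\nabla\varphi_{i}$ to $\varphi_{i}'\partial_{s}$ using that the base is one-dimensional, and keep precise track of where $p_{i}>1$ is genuinely needed, namely only for $(\ref{id2})$, since $(\ref{id1})$ and $(\ref{id3})$ use a single direction in each relevant eigendistribution and hold with no restriction on the multiplicities.
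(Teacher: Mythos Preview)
Your proposal is correct and follows essentially the same approach as the paper's own proof, which is merely a two-sentence sketch: invoke the curvature formulas for a multiply warped product and compare them with the Gauss equation \ref{Gausseq}, noting that the constancy of $k_{i}$ is established just as in the proof of Theorem \ref{rigidity1}. You have simply spelled out the three plane-by-plane comparisons (pure-fiber, base-fiber, cross-fiber) that the paper leaves implicit, and your bookkeeping --- including the observation that only $(\ref{id2})$ genuinely needs $p_{i}>1$ --- is accurate.
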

\begin{proof}
	The idea is to use the curvature formulas of a multiply warped product $J\times_{\varphi_{1}}N_{1}^{p_{1}}\times_{\varphi_{2}}N_{2}^{p_{2}}$ (which can be found in \cite{brozosvazquez}) together with Gauss equation (equation \ref{Gausseq}). The existence of the constant $k_{i}$ follows in a similar fashion to the proof of Theorem \ref{rigidity1}.
\end{proof}

Equations $(\ref{id1})$-$(\ref{id3})$ seem to play an important role in the investigation of Einstein hypersurfaces of $I\times_{f}\mathbb{Q}^n(c)$ with $3$ distinct principal curvatures. One application is in the proof of Theorem \ref{thmlambda_{n}=0}, which deals with the case where $\lambda_{n}$ vanishes.

\begin{proof}[{\bf Proof of Theorem \ref{thmlambda_{n}=0}}]
	Since $M$ does not have constant sectional curvature, it follows from Theorem \ref{intM} and Theorem \ref{rigidity1} that $M$ has three distinct principal curvatures. Now, Proposition \ref{lambdannot0} and the assumption $\lambda_{n}\equiv0$ assures that $p_{1}\geq2$ and $p_{2}\geq2$, and equations $(\ref{eq1})$ and $(\ref{eq2})$ imply that
	\begin{align}\label{threeEQ}
		\begin{split}
			&a+|T|^2 b=-\frac{\rho}{n-1},\\
			&\lambda_{1}\lambda_{2}=-(n-2)|T|^2b<0,\\
			&(p_{1}-1)\lambda_{1}+(p_{2}-1)\lambda_{2}=0.
		\end{split}
	\end{align}
	Since $\lambda_{1}\lambda_{2}<0$, we will assume that $\lambda_{1}<0<\lambda_{2}$. Manipulating $(\ref{threeEQ})$ one gets,
	\begin{align*}
		a=-\frac{\rho}{n-1}+\frac{1}{n-2}\lambda_{1}\lambda_{2}
	\end{align*}
	and then
	\begin{align*}
		\lambda_{i}^{2}-a=\frac{(n-3)p_{j}}{(n-2)(p_{j}-1)}\lambda_{i}^{2}+\frac{\rho}{n-1}.
	\end{align*}
	Fix $i\in\{1,2\}$ and let $j=3-i$. Inserting the relations above into $(\ref{id1})$, $(\ref{id2})$ and $(\ref{id3})$ we obtain
	\begin{align}
		&\frac{d^{2}\varphi_{i}}{ds^{2}}=-\frac{\rho}{n-1}\varphi_{i},\label{id1'}\\
		&\left(\frac{d\varphi_{i}}{ds}\right)^2+\left(\frac{(n-3)p_{j}}{(n-2)(p_{j}-1)}\lambda_{i}^{2}+\frac{\rho}{n-1}\right)\varphi_{i}^2=k_{i},\label{id2'}\\
		&\frac{d\varphi_{1}}{ds}\frac{d\varphi_{2}}{ds}=-\left(\frac{\rho}{n-1}+\frac{n-3}{n-2}\lambda_{1}\lambda_{2}\right)\varphi_{1}\varphi_{2}.\label{id3'}
	\end{align}
	Using $(\ref{id1'})$ and $(\ref{id2'})$, we obtain
	\begin{align*}
		\frac{(n-3)p_{j}}{(n-2)(p_{j}-1)}\frac{d}{ds}\left(\lambda_{i}^{2}\varphi_{i}^{2}\right)=\frac{d}{ds}\left(k_{i}-\left(\frac{d\varphi_{i}}{ds}\right)^2-\frac{\rho}{n-1}\varphi_{i}^{2}\right)=0.
	\end{align*}
	Consider positive constants $A_{1}$ and $A_{2}$ satisfying
	\begin{align}\label{linrela}
		\varphi_{1}=-\frac{A_{1}}{\lambda_{1}}\ \ \ \text{and}\ \ \ \varphi_{2}=\frac{A_{2}}{\lambda_{2}}.
	\end{align}
	In view of the last equation of $(\ref{threeEQ})$ we get
	\begin{align}\label{multiple}
		\varphi_{2}=\frac{(p_{2}-1)A_{2}}{(p_{1}-1)A_{1}}\varphi_{1}
	\end{align}
	Using this equality in $(\ref{labelfi})$ one has
	\begin{align*}
		\frac{1}{|T|^2f}\frac{df}{ds}+\frac{\theta\lambda_{1}}{|T|}=\frac{1}{\varphi_{1}}\frac{d\varphi_{1}}{ds}=\frac{1}{\varphi_{2}}\frac{d\varphi_{2}}{ds}=\frac{1}{|T|^2f}\frac{df}{ds}+\frac{\theta\lambda_{2}}{|T|}
	\end{align*}
	and since $\lambda_{1}\neq\lambda_{2}$, one obtains $\theta\equiv0$. Therefore, $|T|^2\equiv1$, $s=t$ and then 
	\begin{align}\label{goodrelation}
		\frac{1}{f}\frac{df}{dt}=\frac{1}{\varphi_{1}}\frac{d\varphi_{1}}{dt}=\frac{1}{\varphi_{2}}\frac{d\varphi_{2}}{dt},
	\end{align}
	proving the existence of a constant $B_{i}>0$ so that $\varphi_{i}=B_{i}f$. On the other hand, using the last equation of $(\ref{threeEQ})$, $(\ref{linrela})$ and $(\ref{multiple})$ in $(\ref{id3'})$ we get
	\begin{align}\label{stp1}
			B_{i}^2\left(\left(\frac{df}{dt}\right)^2+\frac{\rho}{n-1}f^2\right)=\left(\frac{d\varphi_{i}}{dt}\right)^2+\frac{\rho}{n-1}\varphi^2_i=\frac{(p_{i}-1)(n-3)A_{i}^2}{(p_{j}-1)(n-2)},
	\end{align}
	and  using the last equation of $(\ref{id2'})$ and $(\ref{linrela})$ we obtain
	\begin{align}\label{stp2}
		\left(\frac{d\varphi_{i}}{dt}\right)^2+\frac{\rho}{n-1}\varphi^2_i=k_{i}-\frac{(n-3)p_{j}A_{i}^{2}}{(n-2)(p_{j}-1)}.
	\end{align}
	From $(\ref{stp1})$ and $(\ref{stp2})$ we conclude that
	\begin{align}\label{rela1}
		\frac{(p_{1}-1)A_{1}}{B_{1}}=\frac{(p_{2}-1)A_{2}}{B_{2}},\ \ \ \ k_{i}=\frac{(n-3)A_{i}^2}{p_{j}-1}>0.
	\end{align}
	On the other hand, by $(\ref{defint.a})$, $(\ref{id3})$ and $(\ref{goodrelation})$ we have
	\begin{align*}
		\frac{(f')^2-c}{f^2}=\frac{1}{\varphi_{1}\varphi_{2}}\frac{d\varphi_{2}}{dt}\frac{d\varphi_{2}}{dt}+\lambda_{1}\lambda_{2}=\frac{(f')^2}{f^2}+\lambda_{1}\lambda_{2}
	\end{align*}
	and from $(\ref{linrela})$ and $\varphi_{i}=B_{i}f$,
	\begin{align*}
		-\frac{c}{f^2}=\lambda_{1}\lambda_{2}=-\frac{A_{1}A_{2}}{B_{1}B_{2}f^2}<0,
	\end{align*}
	from where it follows that $c=1$ and $A_{1}A_{2}=B_{1}B_{2}$. This together with $(\ref{rela1})$ implies that the constants $A_{i}$ and $B_{i}$ are given by
	\begin{align*}
		A_{i}=\sqrt{\frac{p_{j}-1}{p_{i}-1}}B_{i},\ \ \ \ \ \ B_{i}=\sqrt{\frac{(p_{i}-1)k_{i}}{n-3}}.
	\end{align*}
	Using these expressions in $\varphi_{i}=B_{i}f$, $(\ref{linrela})$ and $(\ref{stp1})$ we prove the necessity. The proof that $(\ref{example1})$ is immersed in $I\times_{f}\mathbb{S}^n$ is a simple computation, where one has to check all conditions of Theorem \ref{tensorR}.
\end{proof}

The last result of this paper concerns cylinders, that is, the case where $f$ constant.

\begin{theorem}[\cite{SantosPN}]\label{rigidity2}
	Let $(M^{n},g)$, $n\geq4$, be an Einstein manifold locally immersed in $I\times_{f}\mathbb{Q}^{n}(c)$ and suppose that $T$ does not vanish on an open set $U\subset M$. If $f$ is constant and $c\neq0$, then $M$ has constant sectional curvature on $U$.
\end{theorem}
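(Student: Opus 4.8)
The plan is to argue by contradiction: suppose $M$ does not have constant sectional curvature on $U$, reduce to the case of three distinct principal curvatures, and then exploit the degeneracy caused by $f$ being constant to force $|T|^{2}$ to be constant, which collapses the geometry onto a Riemannian product that cannot be Einstein. Since $f$ is constant, $f'=f''=0$, so $(\ref{defint.a})$ gives $a=-c/f^{2}$ and $b=c/f^{2}$; in particular $b=-a$, and since $c\neq0$ we have $b\neq0$, so $I\times_{f}\mathbb{Q}^{n}(c)$ does not have constant sectional curvature and Theorems \ref{intM}, \ref{rigidity1} and \ref{thmlambda_{n}=0} are available. After shrinking $U$ (it suffices to argue on each open subset where the number of distinct principal curvatures is constant), item $(\ref{itone})$ of Theorem \ref{intM} together with Theorem \ref{rigidity1} lets us assume $M$ has exactly three distinct principal curvatures $\lambda_{n},\lambda_{1},\lambda_{2}$ on $U$, with constant multiplicities $1,p_{1},p_{2}$, $p_{1}+p_{2}=n-1$, and that $T$ is the $\lambda_{n}$-direction (Proposition \ref{Tdiag}).

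Next I would extract algebraic identities. Applying $(\ref{eq1})$ to $\lambda_{1}$ and to $\lambda_{2}$ and subtracting gives $(\lambda_{1}-\lambda_{2})(\lambda_{1}+\lambda_{2}-nH)=0$, hence $\lambda_{1}+\lambda_{2}=nH$, and then $(\ref{eq1})$ itself yields $\lambda_{1}\lambda_{2}=\rho+(n-1)a+|T|^{2}b$. On the other hand, by Proposition \ref{prop1} the relation $(\ref{labelfi})$ reads $\tfrac{1}{\varphi_{i}}\tfrac{d\varphi_{i}}{ds}=\tfrac{\theta\lambda_{i}}{|T|}$, because $\tfrac{df}{ds}=0$; multiplying the cases $i=1,2$ and comparing with identity $(\ref{id3})$ of Proposition \ref{step0} gives $\tfrac{\theta^{2}\lambda_{1}\lambda_{2}}{|T|^{2}}=a-\lambda_{1}\lambda_{2}$, that is, $\lambda_{1}\lambda_{2}=a|T|^{2}$ after using $|T|^{2}+\theta^{2}=1$. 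Equating the two expressions for $\lambda_{1}\lambda_{2}$ and substituting $a=-b$ produces $\rho=b\,(n-1-2|T|^{2})$; since $\rho$ and $b\neq0$ are constants, $|T|^{2}$ is constant on $U$.

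I would then close the argument as follows. Because $|T|^{2}$ is constant, $\nabla|T|^{2}=0$; but $(\ref{prehess})$ with $f'=0$ gives $\tfrac12\nabla|T|^{2}=\theta\lambda_{n}T$, so $\theta\lambda_{n}\equiv0$ on $U$. On the open set $\{\lambda_{n}\neq0\}$ this forces $\theta\equiv0$, and then $(\ref{anglefunction})$ (again $f'=0$) gives $0=X(\theta)=-\lambda_{n}\langle T,X\rangle$ for every $X$, whence $T=0$ there, contradicting $T\neq0$; thus $\lambda_{n}\equiv0$ on $U$. Now Theorem \ref{thmlambda_{n}=0} applies: since $M$ is assumed not to have constant sectional curvature, $U$ is locally isometric to $I\times_{\varphi_{1}}\mathbb{S}^{p_{1}}(1/\sqrt{k_{1}})\times_{\varphi_{2}}\mathbb{S}^{p_{2}}(1/\sqrt{k_{2}})$ with $\varphi_{i}=\sqrt{(p_{i}-1)k_{i}/(n-3)}\,f$ and $p_{1},p_{2}\geq2$. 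As $f$ is constant, each $\varphi_{i}$ is a positive constant, so this metric is the Riemannian product metric on $\mathbb{R}\times\mathbb{S}^{p_{1}}(r_{1})\times\mathbb{S}^{p_{2}}(r_{2})$ for suitable radii $r_{i}$; such a product has vanishing Ricci curvature along the $\mathbb{R}$-factor and strictly positive Ricci curvature along each sphere factor, hence is not Einstein, contradicting that $U$ is Einstein. Therefore $M$ has constant sectional curvature on $U$.

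The crux is the second step: squeezing $(\ref{eq1})$, $(\ref{labelfi})$ and $(\ref{id3})$ hard enough to prove that $|T|^{2}$ is constant. The two features that make this work are that $f$ constant forces $a=-b$ (so the Gauss/Ricci relations $(\ref{eq1})$--$(\ref{eq2})$ simplify) and that $(\ref{labelfi})$ degenerates to $\varphi_{i}'/\varphi_{i}=\theta\lambda_{i}/|T|$, which couples cleanly to the warped-product curvature identity $(\ref{id3})$. Once $|T|^{2}$ is constant, Proposition \ref{impTeigen} and $(\ref{anglefunction})$ immediately give $\lambda_{n}\equiv0$, and Theorem \ref{thmlambda_{n}=0} converts the only remaining possibility into a Riemannian product that fails to be Einstein. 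Minor points to handle are the passage to an open subset on which the number of principal curvatures is constant, and the remark that $\{\lambda_{n}\neq0\}$, being open, must be empty once $\theta\lambda_{n}\equiv0$.
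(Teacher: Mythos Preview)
Your proof is correct and follows essentially the same approach as the paper: the same reduction to three principal curvatures, the same key computation combining $(\ref{labelfi})$ with $(\ref{id3})$ to get $\lambda_{1}\lambda_{2}=a|T|^{2}$ and hence $|T|^{2}$ constant, and the same reliance on Theorem~\ref{thmlambda_{n}=0} to dispose of the $\lambda_{n}\equiv0$ case. The only differences are cosmetic: the paper eliminates $\lambda_{n}\equiv0$ first (using $(\ref{id1'})$ to force $\rho=0$, which is incompatible with the ODE for $f$) and then derives the contradiction ``$\theta$ constant'' directly from $|T|^{2}$ constant, whereas you run the steps in the opposite order and phrase the final contradiction as ``Riemannian product of $\mathbb{R}$ with spheres is not Einstein''; both endings are equivalent.
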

\begin{proof}
	Assume by contradiction that $M$ does not have constant sectional curvature on $U$ and that $f$ is constant. Without loss of generality, assume that $f\equiv1$.
	
	Theorem \ref{intM} and Theorem \ref{rigidity1} assure that $M$ has three distinct principal curvatures on a open set $U\subset M$. On the other hand, if $\lambda_{n}$ vanishes, $U$ is locally isometric to $(\ref{example1})$, by Theorem \ref{thmlambda_{n}=0}. But since $f$ is constant, it follows from $(\ref{id1'})$ that its Einstein constant is $0$, what is a contradiction. Thus, $\lambda_{n}$ does not vanish. Consequently, it follows from item $(\ref{gradtheta})$ of Proposition \ref{impTeigen} that $\theta$ is not constant.
	
	On the other hand, from $f\equiv1$, Proposition \ref{prop1} and Proposition \ref{step0} we have
	\begin{align*}
		\frac{d\varphi_{i}}{ds}=\frac{\theta\lambda_{i}}{|T|}\varphi_{i} ,\ \ \ \ \ \ \frac{d\varphi_{1}}{ds}\frac{d\varphi_{2}}{ds}=-(c+\lambda_{1}\lambda_{2})\varphi_{1}\varphi_{2},
	\end{align*}
	and then $\lambda_{1}\lambda_{2}=-c|T|^2$. Since we already have $\lambda_{1}\lambda_{2}=\rho-(n-1)c+c|T|^2$, which follows from $(\ref{eq1})$, we get $2c|T|^2=(n-1)c-\rho$, which implies that $\theta$ is constant, and this is a contradiction. Thus, $M$ has constant sectional curvature.
	\end{proof}

\end{document}